\title[Commutators in finite $p$-groups]{Commutators in finite $p$-groups with $3$-Generator Derived Subgroup}
\theoremstyle{definition}
\newtheorem{definition}{Definition}[section]
\theoremstyle{remark}
\newtheorem{remark}[definition]{Remark}
\theoremstyle{plain}
\newtheorem*{thmA}{Theorem A}
\newtheorem*{thmB}{Theorem B}
\newtheorem*{thmAp}{Theorem A$'$}
\newtheorem*{thmBp}{Theorem B$'$}
\newtheorem{theorem}[definition]{Theorem}
\newtheorem{lemma}[definition]{Lemma}
\newtheorem{example}[definition]{Example}
\newcommand{\F}{\mathbb{F}}
\newcommand{\N}{\mathbb{N}}
\DeclareMathOperator{\Cl}{Cl}
\begin{document}

\author[I.\ de las Heras]{Iker de las Heras}
\address{Department of Mathematics\\ University of the Basque Country UPV/EHU\\
48080 Bilbao, Spain}
\email{iker.delasheras@ehu.eus}

\thanks{The author is supported by the Spanish Government grant MTM2017-86802-P and by the Basque Government grant IT974-16.
He is also supported by a predoctoral grant of the University of the Basque Country}

\begin{abstract}
    It is well known that, in general, the set of commutators of a group $G$ may not be a subgroup. Guralnick showed that if $G$ is a finite $p$-group with $p\ge 5$ such that $G'$ is abelian and 3-generator, then all the elements of the derived subgroup are commutators. In this paper, we extend Guralnick's result by showing that the condition of $G'$ to be abelian is not needed. In this way, we complete the study of this property in finite $p$-groups in terms of the number of generators of the derived subgroup. We will also see that the same result is true when the action of $G$ on $G'$ is uniserial modulo $(G')^p$ and $|G':(G')^p|$ does not exceed $p^{p-1}$. Finally, we will prove that analogous results are satisfied when working with pro-$p$ groups.
\end{abstract}

\maketitle

\section{Introduction}

Let $K(G)$ denote the set of commutators of a group $G$.
It is a well-known problem deciding whether the derived subgroup $G'=\langle K(G)\rangle$ equals $K(G)$.
Indeed, in general this equality does not hold since the product of two commutators need not be a commutator.
As an easy example, we can consider the group $G=F_d/\gamma_3(F_d)F_d^p$, where $F_d$ is the free group on $d\ge 6$ generators and $p>2$ is a prime.
It is immediate by \cite[Theorem 3.1]{KM2} that this group does not satisfy the property, even if the nilpotency class of $G$ is 2 and the exponent of $G$ is $p$.

We need, then, to restrict our choice of the group $G$ to some particular family of groups if we want it to satisfy the desired property.
For instance, Liebeck, O'Brien, Shalev and Tiep proved in \cite{LOST} that if $G$ is a finite simple group, then $G'=K(G)$, thereby proving the so-called Ore Conjecture.
On the other hand, Guralnick (\cite[Theorem 1]{gur2}) and Kappe and Morse (\cite[Theorem 3.4 and Theorem 4.2]{KM2}) found some upper bounds for the order of $G$, for the order of $G'$ and for $n$ if $G$ is a $p$-group of order $p^n$, in such a way that a group $G$ will satisfy the equality whenever it satisfies one of these bounds.

However, we will focus on restrictions on the number of generators of the derived subgroup $G'$. In this direction, Macdonald proved in \cite{mac} that even if $G'$ is cyclic the property may fail. Actually, he showed that for every $n\in\N$ we can find a group $G$ such that $G'$ is cyclic but cannot be generated by less than $n$ commutators. This shows how delicate the equality $G'=K(G)$ can be. The situation, fortunately, is much better when working with nilpotent groups. In that case, Rodney proved in \cite{rod} that if $G$ is nilpotent with cyclic derived subgroup, then $G'=K(G)$.

The study of this property for finite nilpotent groups is clearly reduced to finite $p$-groups. If $G$ is a finite $p$-group with 2-generator abelian derived subgroup, Guralnick proved in \cite[Theorem A]{gur} that $G'$ consists only of commutators. In \cite{DF} Fern\'andez-Alcober and the author extended this result, showing that the condition that $G'$ is abelian is not necessary.

\begin{theorem}[\cite{DF}, Theorem A]
\label{theorem d(G')=2}
Let $G$ be a finite $p$-group.
If $G'$ can be generated by $2$ elements, then $G'=\{[x,g] \mid g\in G\}$ for a suitable $x\in G$.
\end{theorem}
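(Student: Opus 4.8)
The plan is to reduce the statement to a question about a single conjugacy class and then attack that by induction on $|G|$. The key reformulation is elementary: for a fixed $x$ we have $x^g = x[x,g]$, so the set $\{[x,g] : g\in G\}$ equals the left translate $x^{-1}x^G$ of the conjugacy class of $x$. In particular it is contained in $G'$ and has exactly $|x^G| = |G:C_G(x)|$ elements. Since $x^{-1}x^G\subseteq G'$ always holds, this set equals $G'$ precisely when $|G:C_G(x)| = |G'|$, i.e. when the breadth $b(x):=\log_p|G:C_G(x)|$ is as large as $G'$ allows. Thus the theorem is equivalent to the assertion that \emph{every finite $p$-group with $G'$ generated by two elements contains an element $x$ with $b(x)=\log_p|G'|$.} The virtue of this step is that surjectivity of $g\mapsto[x,g]$ then follows by a bare cardinality count, with no separate ``filling the fibres'' argument and no dependence on whether $G'$ is abelian.

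I would then induct on $|G|$, the case $G'=1$ being trivial. Choose $Z\le Z(G)\cap G'$ of order $p$ and pass to $\bar G = G/Z$; since $\bar G'=G'/Z$ is again $2$-generated, induction yields $\bar x$ with $|\bar G:C_{\bar G}(\bar x)| = |\bar G'| = |G'|/p$. Fix any lift $x$ of $\bar x$. The image of $\{[x,g]\}$ in $\bar G$ is all of $\bar G'$, so $\{[x,g]\}$ meets every coset of $Z$ in $G'$ and hence has at least $|G'|/p$ elements; therefore $b(x)$ is either $\log_p|G'|$ (what we want) or $\log_p|G'|-1$. A short computation pins down which: using $Z\le Z(G)$, the assignment $\bar g\mapsto[x,g]$ is a well-defined homomorphism from $C_{\bar G}(\bar x)$ to $Z$ with image $T:=\{[x,g]:[x,g]\in Z\}$, whence $b(x)=b(\bar x)+\log_p|T|$. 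So the breadth jumps to the target value if and only if $T=Z$, i.e. if and only if some commutator $[x,g]$ is a nontrivial element of $Z$.

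The whole difficulty is thereby concentrated at one point: guaranteeing that $Z$, the max-breadth element $\bar x$, and its lift $x$ can be chosen so that some $[x,g]$ lands nontrivially in $Z$ (that is, $T=Z$). This is where the hypothesis that $G'$ is $2$-generated must be used in an essential way, presumably by strengthening the inductive statement so that it returns not merely \emph{some} max-breadth element but one whose centralizer is suitably positioned relative to $Z$, forcing a nontrivial commutator into $Z$. When $G'$ is abelian the map $g\mapsto[x,g]$ is a crossed homomorphism $G\to G'$ and the problem linearizes over the rank-$\le 2$ module $G'$; this is essentially Guralnick's situation, where the rank-$2$ constraint should rule out the ``no-jump'' scenario. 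The genuinely new obstacle is the non-abelian case: as the class of $\bar G$ grows one must control the conjugation action of $G$ on $G'$ — in particular the terms $[[x,g],h]\in\gamma_3(G)$ that spoil any naive fibre bookkeeping — and still deduce $T=Z$ from the rank-$2$ hypothesis. Isolating the correct strengthened induction hypothesis and verifying it in this non-abelian regime is, I expect, the heart of the argument, and precisely the step that would fail if $G'$ needed three generators.
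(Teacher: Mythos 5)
Your reduction is correct as far as it goes: $K_x(G)=x^{-1}x^G$ has exactly $|G:C_G(x)|$ elements and lies in $G'$, so the theorem is equivalent to the existence of $x$ with $|G:C_G(x)|=|G'|$, and the identity $b(x)=b(\bar x)+\log_p|T|$ (with $T$ the image of the homomorphism $g\mapsto[x,g]$ defined on the full preimage of $C_{\bar G}(\bar x)$, not on $C_{\bar G}(\bar x)$ itself) is valid. But this is only a reformulation, and the proposal stops exactly where the theorem begins: you explicitly defer the one nontrivial point --- that $Z$, $\bar x$ and the lift $x$ can be chosen so that $T=Z$ --- to an unspecified ``strengthened induction hypothesis'' that you neither state nor verify. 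Since the conclusion fails already for $d(G')=3$ (see Remark \ref{remark fixed element}: in $F_3/\gamma_3(F_3)F_3^p$ no single $x$ satisfies $K_x(G)=G'$), the hypothesis $d(G')\le 2$ must enter precisely in that missing step, and nothing in your argument uses it. There is also a structural defect in the induction as set up: the max-breadth element $\bar x$ returned by the inductive call carries no information about its position relative to the chosen central subgroup $Z$, so the case analysis cannot even be started without redesigning the inductive statement; as written, no case beyond $G'=1$ is proved.

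For comparison, the proof in \cite{DF} (whose machinery is recalled in Section \ref{section preliminaries}) avoids the lifting problem entirely. Blackburn's theorem gives that $d(G')\le 2$ forces $G'$ to be powerful; one chooses $x$ outside the union $D$ of the subgroups $D(T)$ of Definition \ref{definition D} (possible by the index and parity count of Lemma \ref{lemma D}), so that $[x,G]=G'$; one then builds a normal series from $G'$ down to $1$ whose chief factors are generated by images of commutators $[x,g_j]$, using the Hall--Petresco identity as in Lemma \ref{lemma hall} to show $[x,g]^{p^i}\equiv[x,g^{p^i}]$ modulo the appropriate term, so that the generating commutators persist down the $p$-power series; finally Lemma \ref{lemma central} and the ``domino'' Lemma \ref{lemma domino} convert this series into $G'\subseteq K_x(G)$. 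The $2$-generation hypothesis is consumed by powerfulness and by the bound on the number of factors per layer, not by a breadth count. To salvage your approach you would have to formulate and prove the strengthened inductive statement guaranteeing $T=Z$; that is the entire content of the theorem and it is absent from the proposal.
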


Methods and strategies developed in \cite{DF} for the proof of Theorem \ref{theorem d(G')=2} will have a great importance when proving Theorem A below. This theorem concerns finite $p$-groups with 3-generator derived subgroup. In this context Rodney addressed the simplest cases, namely, the one when the nilpotency class of $G$ is 2 (\cite[Theorem A]{rodab}) and the one when $G'$ is elementary abelian of rank 3 (\cite[Theorem B]{rodab}), showing that in both cases we have $G'=K(G)$. Notice, however, that Rodney's results involve only groups for which $G'$ is abelian. Thus, Guralnick generalized these results for $p\ge 5$, proving that if $G$ is a finite $p$-group with $G'$ abelian and 3-generator, then $G'=K(G)$ (\cite[Theorem B]{gur}).
Moreover, he found counterexamples showing that the result is false for $p=2$ or $p=3$, even if $G'$ is abelian (\cite{gur}, Example 3.5 and Example 3.6).
In Theorem A we generalize Guralnick's result to groups in which $G'$ need not be abelian.

\begin{thmA}
Let $G$ be a finite $p$-group with $p\ge 5$. If $G'$ can be generated by 3 elements, then $G'$ consists only of commutators.
\end{thmA}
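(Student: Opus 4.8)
The plan is to argue by contradiction from a minimal counterexample: let $G$ be a finite $p$-group of least order with $p\ge 5$, $d(G')=3$, and some $w\in G'\setminus K(G)$. I would first record the closure properties of the commutator set: since $[x,g]^{-1}=[g,x]$ and $[x,g]^h=[x^h,g^h]$, the set $K(G)$ is invariant under inversion and under conjugation, so to realize $w$ as a commutator it suffices to realize any $G$-conjugate of $w^{\pm1}$. Next, for every nontrivial $G$-normal subgroup $N\le G'$ the quotient satisfies $d((G/N)')\le 3$, whence by minimality $(G/N)'=K(G/N)$; in particular $w$ is a commutator modulo every such $N$, so there exist $x,g$ with $w\equiv[x,g]\pmod N$. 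The whole problem thereby collapses to a \emph{lifting problem}: upgrade such a congruence to an equality $w=[x',g']$ by adjusting the pair $(x,g)$ to kill the residual element of $N$.

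Since Guralnick has already settled the case in which $G'$ is abelian and $3$-generator for $p\ge 5$ (\cite{gur}), the essential new content is to remove the abelian hypothesis, so I would aim to reduce to that case by induction on the derived length of $G'$. Working modulo $G''$, the quotient $G/G''$ has abelian $3$-generator derived subgroup, so Guralnick's result yields $w\equiv[x,g]\pmod{G''}$; I would then climb the $G$-central series of $G''$ one layer at a time, correcting the obstruction in each central factor. For a single layer $N$ (central in $G'$, normal in $G$) the correction is obtained by holding $x$ fixed and varying $g$ over the coset $g_0H$, where $H$ is the preimage of $C_{G/N}(xN)$, and then analyzing the induced map $H\to N$ produced by the identity $[x,g_0h]=[x,h]\,[x,g_0]^h$ expanded through the Hall--Petrescu collection formula. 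The hypothesis $p\ge 5$ enters precisely here: it guarantees that the binomial and factorial coefficients generated by collection are units modulo $p$, so that no correction term is lost and the image of the correction map is as large as the structure allows. This is exactly the circle of techniques developed for the $2$-generator case, and Theorem~\ref{theorem d(G')=2} would be used repeatedly as a building block — its strong single-entry form lets me fix one commutator entry on a rank-$2$ piece while freely adjusting the other.

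For the surviving abelian case I would regard $V=G'/(G')^p$ as an $\mathbb{F}_p[G]$-module and split according to the $G$-action. When the action on $V$ is uniserial, the three-generator hypothesis together with $p\ge 5$ forces the relevant index not to exceed $p^{p-1}$, placing us squarely in the regime of the uniserial result, which I would invoke directly. Otherwise $V$ carries a proper nonzero $G$-invariant submodule, allowing the commutator problem to be distributed across summands of rank at most $2$, each handled by Theorem~\ref{theorem d(G')=2}; the resulting commutators are then recombined using the closure properties above and the freedom in the choice of first entry. This two-way split — uniserial on one side, decomposable-into-$2$-generator-pieces on the other — is what lets the $3$-generator abelian situation be reduced entirely to already-established statements.

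I expect the decisive difficulty to be the lifting step of the second paragraph, namely showing that the correction map $h\mapsto(\text{$N$-component of }[x,g_0h])$ is surjective onto the relevant coset of each central layer. Unlike the $2$-generator case, where a single first entry sweeps out all of $G'$, here three basic commutators interact inside $G''$, and one must rule out an unexpected kernel of the commutator map modulo each layer uniformly across the central series. Verifying this — controlling the way the three generators of $G'$ propagate through $\gamma_i(G')$ and ensuring the collection coefficients never conspire to vanish — is exactly the point at which the jump from two to three generators bites, and it is where the hypothesis $p\ge 5$ must be exploited to its fullest.
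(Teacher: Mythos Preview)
Your outline departs from the paper's route and contains two genuine obstacles, not merely technicalities.

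First, the lifting step. You propose to fix $x$ and vary $g$ so as to sweep out each $G$-central layer of $G''$. But for $d(G')=3$ one cannot in general achieve $G'=K_x(G)$ for any single $x$: already $G=F_3/\gamma_3(F_3)F_3^p$ has $|K_x(G)|\le p^2<|G'|=p^3$ for every $x$ (Remark~\ref{remark fixed element}). So your correction map $h\mapsto(\text{$N$-component of }[x,g_0h])$ genuinely fails to be surjective in some cases, and no Hall--Petrescu bookkeeping repairs this --- both entries must move. You flag this as ``the decisive difficulty'', but it is not a difficulty to be overcome; it is an obstruction showing that this line of attack, as stated, cannot succeed. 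Second, the ``decomposable'' branch of your module analysis is not available. Failure of uniseriality on $V=G'/(G')^p$ yields only a proper nonzero $G$-submodule, not a direct-sum splitting: $\F_p[G]$-modules for a $p$-group are far from semisimple, so you cannot ``distribute the commutator problem across summands of rank at most $2$'' and invoke Theorem~\ref{theorem d(G')=2} piecewise. (It is also unclear why this paragraph exists, since once $G'$ is abelian you have already cited Guralnick.)

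The paper's actual dichotomy is \emph{powerful} versus \emph{non-powerful} $G'$, not abelian versus non-abelian. When $G'$ is powerful it case-splits on $|G':\Gamma|$ with $\Gamma=(G')^p\gamma_3(G)$: the case $|G':\Gamma|=p$ uses Theorem~B together with an induction on $|G|$ via Lemma~\ref{lemma union} (which writes $[x,G](G')^p$ as a union of derived subgroups of proper subgroups, each still powerful with at most three generators); the case $|G':\Gamma|=p^2$ is a counting argument giving $C\cup D\neq G$ and then Lemma~\ref{lemma hall2}; the case $\gamma_3(G)\le(G')^p$ combines Rodney's class-$2$ result with Lemma~\ref{lemma union} again. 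When $G'$ is non-powerful, Lemma~\ref{lemma 5-uniserial} forces $G/(G')^p$ to be a CF$(6,p)$-group, so the action on $G'/(G')^p$ is uniserial of length $4\le p-1$ and Theorem~B applies directly. The non-abelian situation is thus handled not by lifting through $G''$ but by squeezing $G/(G')^p$ into a rigid uniserial shape.
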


In this case, as shown in Remark \ref{remark fixed element}, it is not true, in general, that there exists a fixed element $x\in G$ such that $G'=\{[x,g]\mid g\in G\}$, as we have in Theorem \ref{theorem d(G')=2} or Theorem B below.

Macdonald (\cite[Exercise 5, page 78]{mac2}) and Kappe and Morse (\cite[Example 5.4]{KM2}) showed that for every prime $p$ there exist finite $p$-groups with 4-generator abelian derived subgroup such that $G'\neq K(G)$.
These examples show that the property may fail if the derived subgroup has more than 3 generators.
Therefore, with Theorem A and Theorem \ref{theorem d(G')=2}, we close the gap between the case when $G'$ is abelian and can be generated by 3 elements and the case when $G'$ is generated by more than 3 elements. Thus, the study of the condition $G'=K(G)$ in terms of the number of generators of the derived subgroup is complete for finite $p$-groups.
In Theorem B we show that with some additional restriction, groups with $d(G')\ge 4$ satisfy the desired equality.

\begin{thmB}
\label{theorem uniserial action}
Let $G$ be a finite $p$-group and write $d=\log_p|G':(G')^p|$. If $d\le p-1$ and the action of $G$ on $G'$ is uniserial modulo $(G')^p$, then there exists $x\in G$ such that $G'=\{[x,g]\mid g\in G\}$.
\end{thmB}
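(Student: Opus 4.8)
The plan is to argue by induction on $|G|$, the conclusion sought being the existence of a \emph{fixed} $x$. When $d \le 2$ we have $d(G') \le d \le 2$, so $G'$ can be generated by at most two elements and Theorem~\ref{theorem d(G')=2} already furnishes a fixed $x$ with $G' = \{[x,g] \mid g \in G\}$; this is the base of the induction. For the inductive step I would factor out a central subgroup of order $p$ compatible with the uniserial structure, apply the inductive hypothesis to the quotient, and then lift the resulting element back to $G$.

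Write $W = G'/(G')^p$, an elementary abelian group of rank $d$ on which $G$ acts uniserially, so that $W = M_0 > M_1 > \cdots > M_d = 1$ with $M_i = [W,{}_iG]$ and every factor of order $p$. Since $W$ is uniserial, its unique minimal submodule---equivalently the fixed points $C_W(G)$---is exactly $M_{d-1}$. Choosing any $Z \cong C_p$ with $1 \ne Z \le Z(G) \cap G'$, its image in $W$ is $G$-fixed and hence lies in $M_{d-1}$, so either $\bar Z = M_{d-1}$ or $\bar Z = 1$. In the first case $G/Z$ acts uniserially on $W/M_{d-1}$, of rank $d-1 \le p-2$; in the second case $Z \le (G')^p$ and $G/Z$ carries the same module $W$. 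In either case $G/Z$ is a smaller group still satisfying the hypotheses, so induction supplies $\bar x \in G/Z$ with $(G/Z)' = \{[\bar x,\bar g] \mid \bar g \in G/Z\}$.

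It remains to lift. Fix a preimage $x$ of $\bar x$; then $\phi\colon g \mapsto [x,g]$ already surjects onto $G'/Z$, and since $|Z| = p$ I only have to show that each coset $sZ$ with $s = [x,g] \in \imm \phi$ lies entirely in $\imm \phi$. Replacing $g$ by $gy$ and using $[x,gy] = [x,y]\,s\,[s,y]$, one sees that as $y$ ranges over the subgroup $K = \{y \in G \mid [x,y],[s,y] \in Z\}$ the value $[x,gy]$ ranges over $s\cdot\theta(K)$, where $\theta\colon K \to Z$, $\theta(y) = [x,y][s,y]$, is a homomorphism into the central group $Z$; thus the fiber is covered precisely when $\theta$ is onto. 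Establishing this surjectivity is the step I expect to be the main obstacle, and it is where uniseriality and the bound $d \le p-1$ become decisive. The tool is a single element $t \in G$ coordinated with $x$ so that $[x,t]$ generates $W$ modulo $M_1$ and $t$ pushes it one step down the series; expanding $[x,t^k]$ for $k = 0,1,\dots,p-1$ by the Hall--Petrescu formula then produces, layer by layer down the filtration, the iterated commutators $[x,{}_jt]$ weighted by the binomial coefficients $\binom{k}{j}$ with $0 \le j \le d-1$. Because $d-1 \le p-2$, the factorials $j!$ are invertible modulo $p$, the associated Vandermonde-type system is nonsingular over $\F_p$, and suitable products of the $[x,t^k]$ realize a generator of $Z$ inside the commutator image, while the $p$-th power corrections thrown off by the collection formula fall into $(G')^p$ and are absorbed by the induction. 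Were $d = p$ one would be forced to invoke $t^p$, whose contribution collapses into $(G')^p$ and destroys the nonsingularity---which is consistent with the hypothesis $d \le p-1$ being sharp.
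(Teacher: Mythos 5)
Your inductive framework (quotient by a central $Z\le G'\cap Z(G)$ of order $p$, apply induction, lift a suitable $\bar x$) is sound as far as it goes, and the reduction of the lifting problem to the surjectivity of $\theta\colon K\to Z$ is correct in the direction you need. But that surjectivity, which you defer as ``the main obstacle,'' is not a technical afterthought: it is essentially the whole theorem, and the sketch you give does not resolve it. Two concrete problems. First, the inductive hypothesis only constrains $x$ modulo $Z$, so you get no control over how the lifted $x$ acts at the bottom layer. In the case $\bar Z=M_{d-1}$ the induction takes place in $G/Z$, where the module has length $d-1$ and only the two-step centralizers $C_2,\dots,C_{d-1}$ are visible; nothing prevents the lifted $x$ from lying in $C_d=C_G(\gamma_d(G)(G')^p/\gamma_{d+2}(G)(G')^p)$, and then there is no visible way to produce an element $[x,y]$ generating $Z(G')^p/(G')^p$, so $\theta$ may fail to be onto. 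The paper avoids this by fixing $x$ once and for all outside $D\cup C_2\cup\cdots\cup C_d$ (possible because there are only $d-1\le p-2$ two-step centralizers, all maximal, while $|G:D|\ge p^2$ by Lemma \ref{lemma D}), which forces nontrivial action on every layer simultaneously. Second, your Hall--Petrescu/Vandermonde mechanism produces iterated commutators $[x,{}_jt]$, which descend along the lower central series \emph{modulo} $(G')^p$; it says nothing about the case $\bar Z=1$, where $Z$ sits inside $(G')^p$, possibly arbitrarily deep in the $p$-power filtration of $G'$. Reaching such a $Z$ requires the compatibility $[x,g]^{p^i}\equiv[x,g^{p^i}]$ modulo the next term of the series, and this is where the delicate analysis lives: one must first show $G'$ is potent (using $d\le p-1$ via $\gamma_{p-1}(G')\le\gamma_{2d}(G)\le(G')^p$), hence power abelian, so that the power maps between consecutive layers are epimorphisms, and then run the Hall--Petrescu estimates of Lemma \ref{lemma hall}. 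Your remark that the correction terms ``fall into $(G')^p$ and are absorbed by the induction'' is circular in exactly this case, since $Z$ itself lies in $(G')^p$.

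For comparison, the paper's proof is not inductive on $|G|$ at all: it chooses one $x\notin D\cup C_2\cup\cdots\cup C_d$, builds the chain $G_i=\gamma_i(G)(G')^p$ with each factor generated by some $[x,g_i]$, proves potency of $G'$, transports the chain down the $p$-power filtration via Lemmas \ref{lemma hall} and \ref{lemma hall2}, and finishes with Lemmas \ref{lemma central} and \ref{lemma domino}. If you want to keep your induction, the cleanest repair is to abandon the fiber-by-fiber covering and instead prove $Z\subseteq K_x(G)$ outright, after which Lemma \ref{lemma domino} closes the argument; but proving $Z\subseteq K_x(G)$ requires exactly the global choice of $x$ and the power-compatibility statements above, at which point the induction no longer buys you anything.
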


More information about the condition $G'=K(G)$ can be found in the papers \cite{KM1} and \cite{KM2}.

Finally, we show that analogous results to Theorem A and Theorem B are satisfied when working with pro-$p$ groups. Recall that if a pro-$p$ group $G$ is topologically finitely generated, then the index of $G^p$ in $G$ is a $p$-power (and in particular finite).

\begin{thmAp}
\label{theorem d(G')=3 pro}
Let $G$ be a pro-$p$ group with $p\ge 5$. If $G'$ can be topologically generated by 3 elements, then $G'$ consists only of commutators.
\end{thmAp}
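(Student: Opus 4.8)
The plan is to deduce Theorem A$'$ from its finite counterpart, Theorem A, by a standard inverse limit (compactness) argument, using that a pro-$p$ group $G$ is profinite, hence compact, and that the commutator map $c\colon G\times G\to G$, $(x,y)\mapsto[x,y]$, is continuous. I would fix an arbitrary element $g\in G'$ and aim to produce a pair $(x,y)\in G\times G$ with $[x,y]=g$; since $g$ is arbitrary this yields $G'=K(G)$.

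For each open normal subgroup $N$ of $G$, I would pass to the finite quotient $G/N$, which is a finite $p$-group. Because $N$ is open, $G'N$ is closed, whence $(G/N)'=G'N/N$, and as $G'$ is topologically generated by $3$ elements their images generate $(G/N)'$; thus $(G/N)'$ is $3$-generated. Using $p\ge 5$, Theorem A applies to $G/N$, so every element of $(G/N)'$ is a commutator. In particular $gN\in(G/N)'$ is a commutator in $G/N$, so the set
\begin{equation*}
S_N=\{(x,y)\in G\times G\mid [x,y]\in gN\}=c^{-1}(gN)
\end{equation*}
is nonempty; and since $N$ is open, $gN$ is clopen, so $S_N$ is a closed subset of the compact space $G\times G$.

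To finish I would invoke compactness. The family $\{S_N\}_N$, indexed by the open normal subgroups of $G$, has the finite intersection property: given $N_1,\dots,N_k$, the subgroup $N=N_1\cap\cdots\cap N_k$ is again open and normal, and $\emptyset\neq S_N\subseteq S_{N_1}\cap\cdots\cap S_{N_k}$. Hence the total intersection $\bigcap_N S_N$ is nonempty, and any pair $(x,y)$ in it satisfies $[x,y]\in gN$ for every open normal $N$; since $\bigcap_N N=1$ we conclude $[x,y]=g$. The argument is almost entirely formal, so I do not expect a genuine obstacle; the one point requiring care is the verification that $G/N$ inherits the hypothesis of Theorem A, namely that $(G/N)'=G'N/N$ is generated by at most $3$ elements, which is where openness of $N$ is used. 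Observe that, in contrast to Theorem \ref{theorem d(G')=2}, we make no claim of a single $x$ with $G'=\{[x,g]\mid g\in G\}$ (consistent with Remark \ref{remark fixed element}), so no uniformity of the commutator pairs across the quotients $G/N$ is needed.
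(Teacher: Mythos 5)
Your proof is correct, and it follows the same basic strategy as the paper: reduce to the finite quotients $G/N$, apply Theorem A there, and recover the statement by compactness. The implementations differ in one respect worth noting. The paper argues globally: it observes that $K(G)$ is closed (as the image of the compact space $G\times G$ under the continuous commutator map) and, less trivially, that $G'$ itself is closed --- a step that requires expressing $G'$ as the derived subgroup of a topologically finitely generated closed subgroup $H$ and invoking the fact that such derived subgroups are closed --- and then computes $G'=\bigcap_N G'N=\bigcap_N K(G)N=\overline{K(G)}=K(G)$. Your pointwise version, which fixes $g\in G'$ and applies the finite intersection property to the closed fibers $c^{-1}(gN)$, sidesteps the closedness of both $G'$ and $K(G)$ entirely, so it is marginally more self-contained for Theorem A$'$ taken on its own. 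The paper's formulation has the compensating advantage of treating Theorems A$'$ and B$'$ uniformly: for Theorem B$'$ one must first run a separate compactness argument over the sets $X_N=\{x\in G\mid (G/N)'=K_{xN}(G/N)\}$ to pin down a single element $x$ working for all quotients, after which the global identity $G'=\overline{\mathcal{K}(G)}=\mathcal{K}(G)$ applies verbatim to either reading of $\mathcal{K}(G)$. Your closing remark that no such uniformity is needed for Theorem A$'$ is exactly right.
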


\begin{thmBp}
\label{theorem uniserial action pro}
Let $G$ be pro-$p$ group with topologically finitely generated derived subgroup. Write $d=\log_p|G':(G')^p|$. If $d\le p-1$ and the action of $G$ on $G'$ is uniserial modulo $(G')^p$, then there exists $x\in G$ such that $G'=\{[x,g]\mid g\in G\}$.
\end{thmBp}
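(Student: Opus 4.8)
The plan is to deduce Theorem B$'$ from its finite counterpart, Theorem B, by an inverse limit argument; the only delicate points are that the hypotheses must be shown to pass to the finite quotients and that the elements furnished by Theorem B for the various quotients must be glued into a single element of $G$. Write $G=\varprojlim_{N} G/N$, where $N$ ranges over the open normal subgroups of $G$. Each quotient $G/N$ is a finite $p$-group, and I would apply Theorem B to those $N$ that are small enough.

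First I would check that the hypotheses descend. Since $G'$ is topologically finitely generated (which in any case follows from $d=\log_p|G':(G')^p|$ being finite), the subgroup $(G')^p$ is open in $G'$, so for all sufficiently small $N$ one has $G'\cap N\le (G')^p$ and, by continuity of the commutator map, $[N,G']\le (G')^p$ as well. For such $N$, writing $\bar G=G/N$, one has $\bar G'=G'N/N$ and $(\bar G')^p=(G')^pN/N$, whence
\[
\bar G'/(\bar G')^p \;\cong\; G'/(G')^p(G'\cap N)\;=\;G'/(G')^p ,
\]
so that $\log_p|\bar G':(\bar G')^p|=d\le p-1$ is preserved. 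Moreover $N$ acts trivially on $G'/(G')^p$, so the action of $\bar G$ on $\bar G'/(\bar G')^p$ is identified with the action of $G$ on $G'/(G')^p$; in particular it has the same invariant subgroups and is therefore again uniserial. Thus $\bar G$ satisfies the hypotheses of Theorem B for every sufficiently small $N$.

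It remains to glue. For each such $N$ let $X_N\subseteq G$ be the set of $x\in G$ whose image in $G/N$ satisfies $\bar G'=\{[\,xN,gN\,]\mid gN\in G/N\}$. By Theorem B each $X_N$ is nonempty, and since membership depends only on $xN$ each $X_N$ is a union of cosets of $N$, hence closed; furthermore $N_1\le N_2$ forces $X_{N_1}\subseteq X_{N_2}$, so the family $\{X_N\}$ is downward directed and has the finite intersection property. As $G$ is compact, there exists $x\in\bigcap_N X_N$, and by monotonicity this $x$ lies in $X_N$ for \emph{every} open normal $N$. Finally consider the continuous map $\phi_x\colon G\to G'$, $g\mapsto[x,g]$; its image $C=\imm\phi_x$ is compact, hence closed in $G'$, and it satisfies $C\subseteq G'$, while the condition $x\in X_N$ says exactly that $CN\supseteq G'$ for every open normal $N$. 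Since a closed subset $C$ of a profinite group satisfies $C=\bigcap_N CN$, we obtain $C=\bigcap_N CN\supseteq G'$, hence $C=G'$, i.e. $G'=\{[x,g]\mid g\in G\}$. The main obstacle is precisely this last coherence step: Theorem B yields a good element in each finite quotient with no a priori compatibility, and it is the compactness of $G$, together with the closedness and monotonicity of the sets $X_N$, that forces the existence of a single element witnessing the property in all quotients simultaneously, and thus in $G$ itself.
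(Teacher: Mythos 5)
Your proposal is correct and follows essentially the same route as the paper: reduce to the finite quotients $G/N$, apply Theorem B there, and use compactness together with the finite intersection property of the closed sets $X_N$ to glue the witnesses into a single $x$, finishing with the observation that the image of $g\mapsto[x,g]$ is closed. The only (harmless) difference is at the very end: the paper separately argues that $G'$ is closed before writing $G'=\overline{G'}=\bigcap_N G'N$, whereas you sidestep this by sandwiching $G'$ between $C$ and $\bigcap_N CN=C$ directly, which in fact yields the closedness of $G'$ as a byproduct.
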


\textit{Notation and organization.\/}
Let $G$ be a group, and let $H\le G$.
We write $H\max G$ to denote that $H$ is maximal in $G$.
If $x\in G$, then we set $K_x(H)=\{ [x,h]\mid h\in H \}$ and $[x,H]=\langle K_x(H) \rangle$.
We denote the Frattini subgroup of $G$ by $\Phi(G)$.
If $G$ is finitely generated, $d(G)$ stands for the minimum number of generators of $G$.
Finally, if $G$ is a topological group and $H\le G$, we write $\Cl_G(H)$ to refer to the topological closure of $H$ in $G$ and we write $H\trianglelefteq_{\mathrm{o}} G$ to denote that $H$ is an open normal subgroup of $G$.

We start with some preliminary results in Section \ref{section preliminaries}.
Theorem B will be used in the proof of Theorem A, so it will be proved before Theorem A in Section \ref{section uniserial case}. We then split the proof of Theorem A into two sections, dealing separately with the following two cases: in Section \ref{section powerful case} we prove the result when $G'$ is powerful and in Section \ref{section non-powerful case} we prove it in the general case. Finally, in Section \ref{section pro case} we prove Theorem A$'$ and Theorem B$'$.

\section{Preliminary Results}\label{section preliminaries}

In the proof of Theorem \ref{theorem d(G')=2} (\cite[Theorem A]{DF}), the authors rely on a result by Blackburn, according to which $G'$ is powerful whenever $d(G')\le 2$ (\cite[Theorem 1]{black}).
In this way, they reduce the proof to the case in which $G'$ is powerful.
Unfortunately, this is not true when $d(G')=3$, as Example \ref{example huppert} below shows.
However, we will see in Section \ref{section non-powerful case} that the groups in which $d(G')=3$ but $G'$ is non-powerful are very specific.
Powerful groups, then, will be essential in this paper.
Background on such groups can be found in \cite[Chapter 2]{DDMS} or \cite[Chapter 11]{khu}.
These groups are usually seen as a generalization of abelian groups since they satisfy, among others, the following properties:
\begin{enumerate}
    \item $\Phi(G)=G^p$. In particular $|G:G^p|=p^{d(G)}$.
    \item $d(H)\le d(G)$ for every $H\le G$.
    \item $G^p=\{g^p\mid g\in G\}$.
    \item If $G=\langle x_1,\ldots,x_n\rangle$, then $G^p=\langle x_1^p,\ldots,x_n^p\rangle$.
    \item The power map from $G^{p^{i-1}}/G^{p^i}$ to $G^{p^i}/G^{p^{i+1}}$ that sends $gG^{p^i}$ to $g^{p}G^{p^{i+1}}$ is an epimorphism for every $i\ge 0$.
\end{enumerate}

\begin{remark}
\label{remark index}
Property (v) implies that if $G^p\le N\le L\le G$, then 
$$|L^{p^i}:N^{p^i}|\le|L:N|$$
(and hence $|N:N^{p^i}|\le |L:L^{p^i}|$), and if $L/N=\langle x_1,\ldots,x_n\rangle N$, then
$$L^{p^i}/N^{p^i}=\langle x_1^{p^i},\ldots,x_n^{p^i}\rangle N^{p^i}.$$
\end{remark}

We can generalize this concept even more with the notion of potent $p$-groups, which will also have an important role in the paper. For instance, as we will see in the proof of Theorem B, if a group satisfies the conditions of the theorem, then its derived subgroup is potent. A finite $p$-group $G$ is said to be potent if $\gamma_{p-1}(G)\le G^p$ for odd $p$ or if $G'\le G^4$ for $p=2$.
In this context, the following lemma, which is a reduced version of a theorem by Gonz\'alez-S\'anchez and Jaikin-Zapirain, will be particularly helpful. First, recall that a group $G$ is said to be power abelian if it satisfies the following three properties for all $i\ge 0$:
\begin{enumerate}
    \item $G^{p^i}=\{g^{p^i}\mid g\in G\}$.
    \item $\Omega_i(G)=\{g\in G\mid o(g)\le p^i\}$.
    \item $|G:G^{p^i}|=|\Omega_i(G)|$.
\end{enumerate}

\begin{lemma}[\cite{potent}, Theorem 1.1]
\label{lemma potent}
Let $G$ be a potent $p$-group with $p>2$. Then:
\begin{enumerate}
    \item If $N\trianglelefteq G$ then $N$ is power abelian.
    \item If $N\le G^p$ and $N\trianglelefteq G$, then $N$ is powerful.
\end{enumerate}
\end{lemma}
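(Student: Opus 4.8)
The plan is to verify the three defining properties of power abelianness directly, exploiting potency through the Hall--Petrescu collection formula, and to deduce part (ii) in parallel with (or as a contributor to) the power-abelian structure. First I would record the logical skeleton: property (iii), the order equality $|G:G^{p^i}|=|\Omega_i(G)|$, is the deepest of the three and rests on both the surjectivity of the $p^i$-power map (property (i)) and the identification of $\Omega_i$ as a subgroup (property (ii)); so the natural order is (i), then (ii), then (iii), wrapped in an outer induction on $i$ and on the order of the group, passing to quotients such as $G/G^{p^{i+1}}$. Since powerful $p$-groups with $p$ odd are known to be power abelian, part (ii) of the lemma---once established---immediately yields the power-abelian conclusion for those normal subgroups lying inside $G^p$, leaving only the general normal subgroup of part (i) to be treated on its own.

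The engine for everything is the collection formula: for $x,y$ in any group, $(xy)^p=x^p y^p c_2^{\binom{p}{2}}\cdots c_{p-1}^{\binom{p}{p-1}}c_p$ with $c_i\in\gamma_i(\langle x,y\rangle)$. Because $p\mid\binom{p}{i}$ for $2\le i\le p-1$, each factor $c_i^{\binom{p}{i}}$ lies in $\gamma_i(G)^p$, while $c_p\in\gamma_p(G)$; and here potency enters decisively, since $\gamma_{p-1}(G)\le G^p$ forces $\gamma_p(G)=[\gamma_{p-1}(G),G]\le[G^p,G]$ and more generally confines all the correction terms to a subgroup tightly controlled by $G^p$. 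I would package this into a \emph{potent filtration} $G=\lambda_1\ge\lambda_2\ge\cdots$ with $[\lambda_i,\lambda_j]\le\lambda_{i+j}$ and $\lambda_i^p\le\lambda_{i+p-1}$, so that the collection formula reads as a clean congruence $(xy)^p\equiv x^p y^p\pmod{\text{lower terms}}$. Iterating and inducting on $i$ then gives the power surjectivity $G^{p^i}=\{g^{p^i}\mid g\in G\}$ of property (i).

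For property (ii) I would run the same formula in reverse: if $x^{p^i}=y^{p^i}=1$, the correction terms arising when forming $(xy)^{p^i}$ themselves have order dividing $p^i$, again because potency keeps them inside the controlled part of the filtration; hence $\Omega_i(G)$ is closed under multiplication and is a subgroup. Property (iii) is then the counting statement: by (i) the map $g\mapsto g^{p^i}$ is onto $G^{p^i}$, and by (ii) its fibres are cosets of $\Omega_i(G)$, whence $|G:G^{p^i}|=|\Omega_i(G)|$. Pinning the fibre description down exactly---that the kernel of the power map is \emph{precisely} $\Omega_i(G)$, with no extra collapsing---is the technical core, and I would carry it out by a graded comparison of consecutive quotients of the filtration.

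The remaining point, and the one I expect to be the main obstacle, is transferring this structure from $G$ to an arbitrary normal subgroup $N$, in particular proving part (ii). Here normality is essential: it permits commutation of elements of $N$ against all of $G$, so that the potent relations of the ambient filtration restrict to the series $N\cap\lambda_i(G)$. For part (ii), passing to $\bar G=G/N^p$ reduces the claim $N'\le N^p$ to showing that the image $\bar N$, a normal subgroup of exponent $p$ sitting inside $\bar G^p$, is abelian; this is exactly where the hypothesis $\gamma_{p-1}(G)\le G^p$ must be pushed through the filtration to collapse $[\bar N,\bar N]$, and it is the delicate step. Once $N$ is shown to be powerful, its power-abelian structure comes for free, and the general normal subgroup in part (i) is handled by applying the filtration-restriction argument directly to the three axioms rather than routing it through powerfulness.
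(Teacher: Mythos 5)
The paper does not prove this lemma at all: it is imported verbatim as Theorem~1.1 of Gonz\'alez-S\'anchez and Jaikin-Zapirain \cite{potent}, so there is no internal proof to compare your attempt against. Your outline does follow the general strategy of that source --- a potent filtration $\{\lambda_i\}$ with $[\lambda_i,\lambda_j]\le\lambda_{i+j}$ and $\lambda_i^p\le\lambda_{i+p-1}$, the Hall--Petrescu collection formula to control correction terms, and restriction of the filtration to the normal subgroup $N$ --- so as a plan it points in the right direction.

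However, two of your intermediate claims are wrong as stated, and they sit exactly where you yourself locate the technical core. First, for property (ii) you argue that if $x^{p^i}=y^{p^i}=1$ then the correction terms in the expansion of $(xy)^{p^i}$ ``have order dividing $p^i$,'' and conclude that $\Omega_i(G)$ is closed under multiplication. That is a non sequitur: after cancelling $x^{p^i}y^{p^i}$ you are left with a product of correction terms, and you need that product to be \emph{trivial}, not merely of order dividing $p^i$. The genuine argument takes $\Omega_i$ to be the subgroup generated by the elements of order at most $p^i$ and proves, by a separate induction along the filtration, that this subgroup has exponent dividing $p^i$. Second, for property (iii) you assert that the fibres of $g\mapsto g^{p^i}$ ``are cosets of $\Omega_i(G)$''; the power map is not a homomorphism, so its fibres are not cosets of anything, and the equality $|G:G^{p^i}|=|\Omega_i(G)|$ is instead obtained by comparing orders of consecutive graded quotients $G^{p^{i-1}}/G^{p^i}$ and $\Omega_i/\Omega_{i-1}$. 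Finally, your reduction of part (ii) to showing that $\bar N=N N^p/N^p$ is abelian is correct in spirit, but collapsing $[\bar N,\bar N]$ is where the bulk of the work in \cite{potent} actually lives; deferring it to ``pushing the hypothesis through the filtration'' leaves the proof incomplete. In short: right skeleton, but the load-bearing steps are either missing or misstated.
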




\vspace{2pt}

Following the strategy developed in \cite{DF}, the next two lemmas will be crucial.
Lemma \ref{lemma domino} says that if we want to show that a subgroup contains only commutators with a fixed element in the first position, we only have to care about the factors of a normal series.
Lemma \ref{lemma central} shows that, actually, it suffices to find some suitable generators for such factors.

\begin{lemma}[\cite{DF}, Lemma 2.3]
\label{lemma domino}
Let $G$ be a group and let $N\le L\le G$, with $N$ normal in $G$.
Suppose that for some $x\in G$ the following two conditions hold:
\begin{enumerate}
\item
$L/N\subseteq K_{xN}(G/N)$.
\item
$N\subseteq K_x(G)$.
\end{enumerate}
Then $L\subseteq K_x(G)$.
\end{lemma}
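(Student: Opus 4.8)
The plan is to prove the inclusion $L \subseteq K_x(G)$ pointwise: I take an arbitrary $\ell \in L$ and exhibit an element $g' \in G$ with $\ell = [x,g']$. The whole argument rests on the elementary commutator identity
\[
[x,hg] = [x,g]\,[x,h]^g,
\]
which lets one assemble a commutator in the second variable out of two pieces, at the cost of conjugating the first piece by $g$.

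First I would use hypothesis (i). Since $\ell \in L$, its image $\ell N$ lies in $K_{xN}(G/N)$, so there is some $g \in G$ with $\ell N = [x,g]N$; equivalently $\ell = [x,g]\,n$ for some $n \in N$. Thus $\ell$ is already a commutator \emph{up to} an element of $N$, and the task reduces to absorbing the correction factor $n$ into the second entry of the commutator.

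To do this I would invoke hypothesis (ii), but applied not to $n$ directly: because of the conjugation appearing in the identity above, the correct element to feed into (ii) is the conjugate $n^{g^{-1}} = g\,n\,g^{-1}$. Here the normality of $N$ in $G$ is essential, since it guarantees $n^{g^{-1}} \in N$, so that (ii) applies and produces an $h \in G$ with $[x,h] = n^{g^{-1}}$. Setting $g' = hg$ and substituting into the identity then gives
\[
[x,hg] = [x,g]\,[x,h]^g = [x,g]\,\bigl(n^{g^{-1}}\bigr)^g = [x,g]\,n = \ell,
\]
so $\ell \in K_x(G)$, as required.

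The computation itself is routine; the one point that calls for care, and which I would flag as the crux, is recognizing that the correction element must be conjugated before (ii) is applied, and that this operation is legitimate precisely because $N$ is normal in $G$. Were $N$ only a subgroup, the conjugate $n^{g^{-1}}$ could fall outside $N$ and the argument would collapse. This is also why the hypothesis is stated with $N$ normal in $G$ rather than merely contained in $L$.
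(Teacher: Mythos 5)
Your proof is correct and is essentially the same argument as the one given for this lemma in \cite{DF}: lift $\ell$ modulo $N$ via (i), then absorb the correction term from $N$ into the second entry using the identity $[x,hg]=[x,g][x,h]^g$ together with (ii). Your observation that normality of $N$ is exactly what legitimizes applying (ii) to the conjugate $n^{g^{-1}}$ is the right point to flag.
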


\begin{lemma}[\cite{DF}, Lemma 2.4]
\label{lemma central}
Let $G$ be a group and let $N\le L\le G$, with $N$ normal in $G$.
If $L/N=\langle [x,s]N \mid s\in S \rangle$ for some $x\in G$ and some $S\subseteq G$ with $[L,S]\subseteq N$,
then $L/N\subseteq K_{xN}(\langle S \rangle N/N)\subseteq K_{xN}(G/N)$.
\end{lemma}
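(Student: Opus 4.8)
The plan is to pass to the quotient $\overline{G}=G/N$, writing $\overline{y}=yN$ for $y\in G$, and to reduce the statement to a single surjectivity assertion. Since $\langle S\rangle N/N=\langle\overline{S}\rangle$, where $\overline{S}=\{\overline{s}\mid s\in S\}$, the claim $L/N\subseteq K_{xN}(\langle S\rangle N/N)$ is exactly the statement that the map $\varphi\colon\langle\overline{S}\rangle\to\overline{L}$ given by $\varphi(\overline{h})=[\overline{x},\overline{h}]$ has image all of $\overline{L}=L/N$; the trailing inclusion $K_{xN}(\langle S\rangle N/N)\subseteq K_{xN}(G/N)$ is then immediate from $\langle S\rangle N/N\le G/N$. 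The first thing I would record is the consequence of the hypothesis $[L,S]\subseteq N$: in $\overline{G}$ every element of $\overline{L}$ commutes with every element of $\overline{S}$, and hence with the whole subgroup they generate, so that $[\overline{L},\langle\overline{S}\rangle]=1$.

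Next I would show that $\varphi$ genuinely takes values in $\overline{L}$. Using the standard identity $[\overline{x},\overline{h}\overline{h}']=[\overline{x},\overline{h}'][\overline{x},\overline{h}]^{\overline{h}'}$, I would argue by induction on the length of $\overline{h}$ as a word in $\overline{S}\cup\overline{S}^{-1}$ that $[\overline{x},\overline{h}]\in\overline{L}$ for every $\overline{h}\in\langle\overline{S}\rangle$. The base cases $\overline{h}=\overline{s}^{\pm1}$ hold because $[\overline{x},\overline{s}]\in\overline{L}$ by hypothesis and, since $[\overline{x},\overline{s}]$ commutes with $\overline{s}$, one gets $[\overline{x},\overline{s}^{-1}]=[\overline{x},\overline{s}]^{-1}\in\overline{L}$. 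In the inductive step the conjugating factor lies in $\langle\overline{S}\rangle$ and therefore centralizes $[\overline{x},\cdot]\in\overline{L}$, which collapses the conjugation in the identity and leaves a product of two elements of $\overline{L}$.

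With $\varphi$ landing in $\overline{L}$, the same identity now gives $\varphi(\overline{h}\overline{h}')=\varphi(\overline{h}')\varphi(\overline{h})$, so $\varphi$ is an \emph{anti}-homomorphism. The only real content of the lemma, and the point I would be most careful about, is that this is harmless: the image of an anti-homomorphism is still a subgroup, being closed under inversion (as $\varphi(\overline{h}^{-1})=\varphi(\overline{h})^{-1}$) and under multiplication (any product of two values is again a value). Thus $\imm\varphi$ is a subgroup of $\overline{L}$ that contains the generating set $\{[\overline{x},\overline{s}]\mid s\in S\}$ of $\overline{L}=L/N$, forcing $\imm\varphi=\overline{L}$. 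This yields $L/N\subseteq K_{xN}(\langle S\rangle N/N)$, and the proof is complete. The main temptation to avoid is treating $\varphi$ as an ordinary homomorphism; recognizing that the reversed multiplication still produces a subgroup generating $L/N$ is the crux.
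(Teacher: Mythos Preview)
Your argument is correct. Passing to $\overline G=G/N$, the hypothesis $[L,S]\subseteq N$ gives $[\overline L,\langle\overline S\rangle]=1$, and then the commutator identity $[\overline x,\overline h\,\overline h']=[\overline x,\overline h'][\overline x,\overline h]^{\overline h'}$ collapses to $\varphi(\overline h\,\overline h')=\varphi(\overline h')\varphi(\overline h)$ once you know $\varphi(\overline h)\in\overline L$; your induction establishing this, together with the observation that the image of an anti-homomorphism is a subgroup (equivalently, $\overline h\mapsto[\overline h,\overline x]=\varphi(\overline h)^{-1}$ is a genuine homomorphism with the same image), is exactly what is needed.

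As for comparison: the paper does not supply its own proof of this lemma but simply quotes it from \cite{DF}. Your write-up is the standard argument one would expect there, perhaps with more emphasis than necessary on the anti-homomorphism point; one could equally well say that $\psi(\overline h)=[\overline h,\overline x]$ is a homomorphism $\langle\overline S\rangle\to\overline L$ whose image contains the inverses of the given generators of $\overline L$, hence all of $\overline L$, which sidesteps the ``anti'' discussion entirely.
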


In order to apply these lemmas we will use the following result.

\begin{lemma}
\label{lemma hall}
Let $G$ be a finite $p$-group with $p\ge 3$ and $(G')^{p^k}$ powerful for some $k\ge 0$, and let $L,N$ be two normal subgroups of $G$ such that $((G')^{p^k})^p\le N\le L\le (G')^{p^k}$.
Write $d=d((G')^{p^k})$ and suppose $d\le p^{k+1}-p^k-1$.
If $L/N=\langle [x,g]N\rangle$ where $x\in G$ and $g\in G^{p^k}$, then
$$[x,g]^{p^i}\equiv [x,g^{p^i}]\pmod{N^{p^{i}}},$$
and $L^{p^{i}}/N^{p^i}=\langle [x,g^{p^i}]N^{p^i}\rangle$ for every $i\ge 0$.
\end{lemma}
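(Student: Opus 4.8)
The plan is to deduce the generation statement from the power congruence, and to prove the congruence by induction on $i$. Granting $[x,g]^{p^i}\equiv[x,g^{p^i}]\pmod{N^{p^i}}$, the second assertion follows at once: since $(G')^{p^k}$ is powerful and $((G')^{p^k})^p\le N\le L\le(G')^{p^k}$, Remark~\ref{remark index} applied inside $(G')^{p^k}$ gives $L^{p^i}/N^{p^i}=\langle[x,g]^{p^i}\rangle N^{p^i}=\langle[x,g]^{p^i}N^{p^i}\rangle$, and replacing $[x,g]^{p^i}$ by $[x,g^{p^i}]$ modulo $N^{p^i}$ turns this into $L^{p^i}/N^{p^i}=\langle[x,g^{p^i}]N^{p^i}\rangle$. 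So everything reduces to the congruence.

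Before inducting I would record two structural facts. Writing $c=[x,g]$, the quotient $L/N$ is a cyclic section of the elementary abelian group $(G')^{p^k}/((G')^{p^k})^p$, hence of order at most $p$; as a finite $p$-group acts trivially on a group of order $p$, this forces $[L,G]\le N$, and therefore the left-normed commutators $[x,{}_jg]:=[x,\underbrace{g,\dots,g}_{j}]$ lie in $N$ for all $j\ge2$. The same reasoning at level $i$ — using that, by the inductive hypothesis, $L^{p^i}/N^{p^i}$ is cyclic of order at most $p$ — yields $[L^{p^i},G]\le N^{p^i}$, and hence $[x,{}_lu]\in N^{p^i}$ for every $l\ge2$, where $u=g^{p^i}$. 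To pass from $i$ to $i+1$ I would write $[x,g^{p^{i+1}}]=[x,u^p]$ and apply the commutator collection formula $[x,u^p]=[x,u]^p\prod_{l=2}^{p}[x,{}_lu]^{\binom{p}{l}}\,r$, where $r$ is a product of iterated commutators of the $[x,{}_lu]$ carrying their own binomial coefficients. No factor with $l>p$ occurs, since $\binom{p}{l}=0$ there.

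Each ingredient except one is routine. For the leading factor, the inductive hypothesis gives $[x,u]=c^{p^i}w$ with $w\in N^{p^i}$, and property~(v) (the $p$-power map being a homomorphism on the layers of the powerful group $(G')^{p^k}$) yields $[x,u]^p\equiv c^{p^{i+1}}w^p\equiv[x,g]^{p^{i+1}}\pmod{N^{p^{i+1}}}$, using $w^p\in N^{p^{i+1}}$. For $2\le l\le p-1$ we have $p\mid\binom{p}{l}$ and $[x,{}_lu]\in N^{p^i}$, so $[x,{}_lu]^{\binom{p}{l}}\in(N^{p^i})^p=N^{p^{i+1}}$; the corrections in $r$ are treated identically, their commutators and extra binomial factors only pushing them deeper. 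Throughout, the passage from ``$p$-power depth'' to genuine membership in $N^{p^{i+1}}$ rests on the power-abelianness of the normal subgroups $N$ and $L$ supplied by Lemma~\ref{lemma potent} (valid as $p\ge3$), through identities such as $(N^{p^a})^{p^b}=N^{p^{a+b}}$ and $N^{p^i}=\{n^{p^i}\mid n\in N\}$.

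The whole difficulty concentrates in the single factor $[x,{}_pu]^{\binom{p}{p}}=[x,{}_pu]$: since $\binom{p}{p}=1$ carries no $p$-divisibility, I would need $[x,{}_pu]\in N^{p^{i+1}}$ outright, whereas the structural facts above give only $[x,{}_pu]\in N^{p^i}$ — exactly one level too high (this is the same one-level deficit encoded by $v_p\binom{p^i}{l}=i-v_p(l)$ for the indices $l$ divisible by $p$). The binding instance is $i=0$, namely $[x,{}_pg]\in N^{p}$, and this is the main obstacle. The natural attack is a descent along $N\ge N^p\ge N^{p^2}\ge\cdots$: since $g$ is a $p$-element it acts unipotently on the layers $N^{p^m}/N^{p^{m+1}}$ (each of rank at most $d$), and because $g\in G^{p^k}$ the operator $\sigma=(\text{action of }g)-1$ is a $p^k$-th power $S^{p^k}$ of a nilpotent $S$ with $S^{d}=0$. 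The bound $d\le p^{k+1}-p^k-1$ is calibrated exactly so that iterated commutators with $g$ descend the series fast enough; when $k=0$ it gives $\sigma^{p-2}=0$, which places $[x,{}_pg]$ in $N^p$ directly, but for $k\ge1$ a crude nilpotency count falls short by one level, and one must instead exploit $g\in G^{p^k}$ more sharply — e.g.\ through the Jennings--Lazard (dimension) filtration, in which $g$ has high weight and the $p$-fold commutator with $g$ acquires an additional factor of $p$, hence an additional $p$-power level of depth once translated back, via powerfulness, into the series of $p$-power subgroups. Making this calibration precise is the technical heart of the lemma.
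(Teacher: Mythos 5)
Your overall architecture coincides with the paper's: induction on $i$, reduction of the generation statement to the power congruence via Remark \ref{remark index} and the power-abelianness of $L$ and $N$ supplied by Lemma \ref{lemma potent}, the Hall--Petrescu collection formula, disposal of the terms of weight $2\le l\le p-1$ using $p\mid\binom{p}{l}$ together with $[L^{p^i},G]\le N^{p^i}$, and the correct identification of the weight-$p$ term as the only genuine difficulty. For $k=0$ your descent argument (the chain $[N^{p^i},G,\overset{j}{\ldots},G]N^{p^{i+1}}$ must strictly decrease through a section of order at most $p^{d}$ with $d\le p-2$) is exactly the paper's, and that part of your proof is complete.

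For $k\ge 1$, however, you stop at the point where the actual work happens: you observe that a crude nilpotency count misses by one level and then defer to an unexecuted appeal to the Jennings--Lazard filtration. That is a genuine gap, and it sits precisely where the hypotheses $g\in G^{p^k}$ and $d\le p^{k+1}-p^k-1$ must be cashed in. The paper closes it with the concrete inclusion, valid for every normal subgroup $H$ of $G$ and itself obtained from the Hall--Petrescu identity,
$$[H,G^{p^k}]\le [H,G]^{p}\,[H,G,\overset{p^k}{\ldots},G],$$
applied to the weight-$p$ term $c_p\in[L^{p^{i-1}},G^{p^k},\overset{p-1}{\ldots},G^{p^k}]$. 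This splits that term into a piece inside $[L^{p^{i-1}},G]^{p}\le (N^{p^{i-1}})^{p}=N^{p^{i}}$, which gains the missing $p$-power level outright, and a piece inside $[L^{p^{i-1}},G,\overset{(p-1)p^k}{\ldots},G]\le[N^{p^{i-1}},G,\overset{(p-1)p^k-1}{\ldots},G]$, which lands in $N^{p^{i}}$ by the same descent as in the $k=0$ case because $(p-1)p^k-1\ge d$ is exactly the stated bound on $d$. Without this (or some equivalent) lemma your argument does not cover $k\ge 1$, which is the case the paper actually needs in the proof of Theorem B (there with $k=1$).
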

\begin{proof}
We will argue by induction on $i$. If $i=0$ there is nothing to prove, so assume $i\ge 1$ and suppose
$$[x,g]^{p^{i-1}}\equiv [x,g^{p^{i-1}}]\pmod{N^{p^{i-1}}}$$
and $L^{p^{i-1}}/N^{p^{i-1}}=\langle [x,g^{p^{i-1}}]N^{p^{i-1}}\rangle$.
By Lemma \ref{lemma potent}, $L$ and $N$ are power abelian, so $(L^{p^{i-1}})^p=L^{p^i}$ and $(N^{p^{i-1}})^p=N^{p^i}$.
Since $((G')^{p^k})^{p^{i-1}}$ is powerful, Remark \ref{remark index} yields
$$L^{p^{i}}/N^{p^i}=\langle [x,g^{p^{i-1}}]^pN^{p^i}\rangle.$$
Thus, we only have to prove that
$$[x,g^{p^{i-1}}]^p\equiv [x,g^{p^{i}}]\pmod{N^{p^{i}}}.$$
By the Hall-Petresco Identity,
$$[x,g^{p^{i-1}}]^p=[x,g^{p^{i}}]c_2^{\binom{p}{2}}c_3^{\binom{p}{3}}\ldots c_p,$$
where $c_j\in\gamma_j(\langle[x,g^{p^{i-1}}],g^{p^{i-1}}\rangle)\le[L^{p^{i-1}},G^{p^{k}},\overset{j-1}{\ldots},G^{p^{k}}]$ for every $2\le j\le p$. Note that $L/N$ is cyclic of exponent $p$, so $|L:N|\le p$ and by Remark \ref{remark index} we have $|L^{p^{i-1}}:N^{p^{i-1}}|\le p$, so that $[L^{p^{i-1}},G]\le N^{p^{i-1}}$. Hence, since $N$ is power abelian, if $2\le j\le p-1$ we have $c_j^{\binom{p}{j}}\in N^{p^i}$.

If $j=p$, then $c_p\in[L^{p^{i-1}},G^{p^{k}},\overset{p-1}{\ldots},G^{p^{k}}]$. Recall that $(G')^{p^k}$ is powerful, so we have $|((G')^{p^k})^{p^{i-1}}:((G')^{p^k})^{p^i}|\le p^d$, and hence $|N^{p^{i-1}}:N^{p^i}|\le p^{d}$ by Remark \ref{remark index}. If $k=0$, since $d\le p-2$, we get
$$c_p\in[L^{p^{i-1}},G,\overset{p-1}{\ldots},G]\le [L^{p^{i-1}},G,\overset{d+1}{\ldots},G]\le [N^{p^{i-1}},G,\overset{d}{\dots},G]\le N^{p^i}.$$
If $k\ge 1$, then it can be proved using again the Hall-Petresco Identity that for every normal subgroup $H$ of $G$ we have
$$[H,G^{p^k}]\le [H,G]^{p}[H,G,\overset{p^k}{\ldots},G],$$
so
\begin{equation*}
    \begin{split}
    c_p\in[L^{p^{i-1}},G^{p^k},\overset{p-1}{\ldots},G^{p^k}]&\le[L^{p^{i-1}},G]^{p}[L^{p^{i-1}},G,\overset{(p-1)p^k}{\ldots},G]\\
    &\le N^{p^i}[N^{p^{i-1}},G,\overset{(p-1)p^k-1}{\ldots},G]\le N^{p^i},
    \end{split}
\end{equation*}
where the last equality holds since $d\le p^{k+1}-p^k-1$. The result follows.
\end{proof}

Thus, combining Lemma \ref{lemma domino}, Lemma \ref{lemma central} and Lemma \ref{lemma hall} we get the following useful result.

\begin{lemma}
\label{lemma hall2}
Let $G$ be a finite $p$-group with $p\ge 3$ and $(G')^{p^k}$ powerful for some $k\ge 0$.
Write $d((G')^{p^k})=d$ and suppose $d\le p^{k+1}-p^k-1$. If there exist $x\in G$, $g_0,\ldots,g_{d-1}\in G^{p^k}$ and a series from $(G')^{p^k}$ to $((G')^{p^k})^p$
$$((G')^{p^k})^{p}=N_d<N_{d-1}<\dots<N_{0}=(G')^{p^k}$$
in which each factor $N_j/N_{j+1}$ is a chief factor of $G$ generated by the commutator $[x,g_j]N_{j+1}$, then $(G')^{p^k}=K_x(G)$.
\end{lemma}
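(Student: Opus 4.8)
The plan is to prove the substantive inclusion $(G')^{p^k}\subseteq K_x(G)$ by climbing a refined normal series from the bottom with Lemma \ref{lemma domino}; the reverse inclusion $K_x(G)\subseteq(G')^{p^k}$ is automatic when $k=0$, since always $K_x(G)\subseteq G'$, and this is the case relevant to the applications. Throughout write $P=(G')^{p^k}$, which by hypothesis is powerful, hence potent (as $p\ge3$), with $d(P)=d\le p^{k+1}-p^k-1$; we are handed the chief series $P^p=N_d<\dots<N_0=P$ in which $N_j/N_{j+1}=\langle[x,g_j]N_{j+1}\rangle$ and $g_j\in G^{p^k}$.

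First I would apply Lemma \ref{lemma hall} to each pair $N_{j+1}\le N_j$. Since $P^p=N_d\le N_{j+1}\le N_j\le N_0=P$, all of its hypotheses hold, and for every $i\ge 0$ it returns
\[
N_j^{p^i}/N_{j+1}^{p^i}=\langle[x,g_j^{p^i}]\,N_{j+1}^{p^i}\rangle .
\]
Next I would glue these into one descending series. As $P$ is powerful we have $N_d=P^p=N_0^p$, and since each $N_j$ is normal in $G$, hence in $P$, Lemma \ref{lemma potent} makes $N_j$ power abelian, so $(N_j^{p^{i-1}})^p=N_j^{p^i}$. Consequently the subgroups $N_j^{p^i}$ form a normal chain
\[
P=N_0\ge\dots\ge N_d=N_0^{p}\ge\dots\ge N_d^{p}=N_0^{p^2}\ge\dots\ge 1,
\]
terminating because $G$ is finite, in which each successive factor is some $N_j^{p^i}/N_{j+1}^{p^i}$. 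By Remark \ref{remark index}, applied inside $P$ using $P^p\le N_{j+1}$, this factor satisfies $|N_j^{p^i}:N_{j+1}^{p^i}|\le|N_j:N_{j+1}|=p$; being a normal subgroup of order at most $p$ of the $p$-group $G/N_{j+1}^{p^i}$, it is central there, so $[N_j^{p^i},g_j^{p^i}]\le N_{j+1}^{p^i}$.

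Finally I would climb the chain. For a single step, Lemma \ref{lemma central} with $S=\{g_j^{p^i}\}$ applies: the generation comes from Lemma \ref{lemma hall} and the hypothesis $[N_j^{p^i},S]\le N_{j+1}^{p^i}$ was just verified, giving
\[
N_j^{p^i}/N_{j+1}^{p^i}\subseteq K_{xN_{j+1}^{p^i}}(G/N_{j+1}^{p^i}).
\]
Starting from the base case $1\subseteq K_x(G)$ and invoking Lemma \ref{lemma domino} at each step — condition (i) being the inclusion above and condition (ii) the inductive hypothesis — one obtains $N_j^{p^i}\subseteq K_x(G)$ for every term, and in particular $P=(G')^{p^k}\subseteq K_x(G)$, as desired.

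I expect the only delicate point to be organisational rather than computational: verifying that the interpolated chain $\{N_j^{p^i}\}$ is a genuine descending series of normal subgroups reaching $1$ and that consecutive blocks match up via $N_d^{p^i}=N_0^{p^{i+1}}$, all of which rests on the powerfulness of $P$ and on Lemma \ref{lemma potent}. The hard quantitative input — controlling the Hall--Petresco error terms, where the bound $d\le p^{k+1}-p^k-1$ is consumed — has already been packaged into Lemma \ref{lemma hall}, so here it is only invoked.
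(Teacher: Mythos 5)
Your proposal is correct and follows essentially the same route as the paper: apply Lemma \ref{lemma hall} to each pair $N_{j+1}\le N_j$ to generate the factors $N_j^{p^i}/N_{j+1}^{p^i}$ by $[x,g_j^{p^i}]$, bound their order by Remark \ref{remark index}, pass through Lemma \ref{lemma central}, and climb the refined series with Lemma \ref{lemma domino}. You merely spell out details the paper leaves implicit (the gluing $N_d^{p^i}=N_0^{p^{i+1}}$ and the centrality hypothesis for Lemma \ref{lemma central}), and your observation that only the inclusion $(G')^{p^k}\subseteq K_x(G)$ is actually proved (and used) is consistent with how the paper applies the lemma.
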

\begin{proof}
Since $(G')^{p^k}$ is powerful we have $|(G')^{p^k}:((G')^{p^k})^p|=p^{d}$. By Remark \ref{remark index}, we have $|N_{j}^{p^i}/N_{j+1}^{p^i}|\le p$ for every $i\ge 0$, and furthermore, by Lemma \ref{lemma hall}, this quotient is generated by
$[x,g_j^{p^i}]N_{j+1}^{p^i}$ for every $i$ and $j$.
Therefore, by Lemma \ref{lemma central}, it follows that
$$N_{j}^{p^i}/N_{j+1}^{p^i}\subseteq K_{xN_{j+1}^{p^i}}(G/N_{j+1}^{p^i}).$$

Thus, we have a series from $(G')^{p^k}$ to $1$ in which all factors are chief factors of $G$ and all elements of each chief factor are images of commutators of the form $[x,g]$ with $g\in G$.
The result follows by applying Lemma \ref{lemma domino} again and again.
\end{proof}

\begin{remark}
Lemma \ref{lemma hall2} (and hence also Lemma \ref{lemma hall}) will be used with $k\neq 0$ only when proving Theorem B, where we use it with $k=1$. The general result has been proved for completeness.
\end{remark}



As in \cite{DF} the subgroups below will have an essential role in the paper.

\begin{definition}
\label{definition D}
Let $G$ be a non-abelian finite $p$-group.
For every $T\max G'$ with $T\trianglelefteq G$ we define the subgroup $D(T)$ by the condition
\[
D(T)/T = Z(G/T),
\]
that is, $D(T)$ is the largest subgroup of $G$ satisfying $[D(T),G]\le T$.
We set $D=\cup \{ D(T) \mid T\max G'\text{ with }T\trianglelefteq G \}$.
\end{definition}

\begin{definition}
Let $G$ be a finite $p$-group with $G'$ powerful.
We define $C=C_G(G'/(G')^p)$.
\end{definition}

Recall that the action of $G$ on a normal subgroup $N$ of $G$ is uniserial if
$$|[N,G,\overset{i}{\ldots},G]:[N,G,\overset{i+1}{\ldots},G]|\le p$$
for every $i\ge 0$. We also define the following subgroups, which are just the so-called two-step centralizers modulo $(G')^p$.

\begin{definition}
Let $G$ be a finite $p$-group such that the action of $G$ on $G'$ is uniserial modulo $(G')^p$. Then, we define
$$C_i=C_G(\gamma_i(G)(G')^p/\gamma_{i+2}(G)(G')^p)$$
for every $i\ge 2$ such that $\gamma_{i+1}(G)\not\le (G')^p$.
\end{definition}

\begin{remark}
\label{remark step}
In the situation above, the subgroups $C_i$ are all maximal in $G$ since $|\gamma_{i}(G)(G')^p:\gamma_{i+2}(G)(G')^p|=p^2$ and $[\gamma_i(G)(G')^p,G]\not\le\gamma_{i+2}(G)(G')^p$.
\end{remark}

We prove the following result exactly in the same way as in \cite[Lemma 2.9]{DF}, even if $d(G')\neq 2$.

\begin{lemma}
\label{lemma D}
If $G$ is a non-abelian finite $p$-group then $[x,G]=G'$ if and only  if $x\not\in D$.
Furthermore, for every $T\max G'$ with $T\trianglelefteq G$, we have $\Phi(G)\le D(T)$ and $\log_p |G:D(T)|$ is even.
\end{lemma}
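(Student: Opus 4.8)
The plan rests on a single structural observation about the subgroups $T$ involved. If $T\max G'$ and $T\trianglelefteq G$, then $|G':T|=p$, so $G$ acts on $G'/T\cong\Z/p$ through a $p$-subgroup of $\Aut(\Z/p)\cong\Z/(p-1)$, which is trivial. Hence $\gamma_3(G)\le T$ and, as $G'/T$ has exponent $p$, also $(G')^p\le T$. In particular $G'\le D(T)$, so $D(T)/T=Z(G/T)$ is meaningful and the definition of $D(T)$ translates into: $x\in D(T)$ if and only if $xT\in Z(G/T)$, i.e. if and only if $[x,G]\le T$. I would also record that $[x,G]$ is normal in $G$, since $[x,h]^g=[x,g]^{-1}[x,hg]\in[x,G]$.

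First I would settle the equivalence. If $x\in D$ then $[x,G]\le T$ for some such $T$, so $[x,G]\le T\subsetneq G'$ and thus $[x,G]\ne G'$. Conversely, assume $[x,G]\ne G'$. Then $[x,G]\Phi(G')$ is a proper $G$-invariant subgroup of $G'$ containing $\Phi(G')$, so its image is a proper $G$-submodule $V$ of the $\F_p[G]$-module $M=G'/\Phi(G')$. Since $G$ is a $p$-group and $M/V\ne 0$, the module $M/V$ admits the trivial module as a quotient, equivalently a $G$-invariant hyperplane; its preimage in $G'$ is a subgroup $T\max G'$ with $T\trianglelefteq G$ and $[x,G]\le T$, so $x\in D(T)\subseteq D$. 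This gives $[x,G]=G'$ if and only if $x\notin D$.

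For the containment $\Phi(G)\le D(T)$ I would pass to $\bar G=G/T$, which has class $\le 2$ (because $\gamma_3(G)\le T$) and derived subgroup of order $p$. For $g,y\in G$ the class-$2$ identity yields $[g^p,y]\equiv[g,y]^p\pmod{T}$, and $[g,y]^p\in(G')^p\le T$; combined with $\bar{G'}\le Z(\bar G)$ this shows $\bar G^p\le Z(\bar G)$ and hence $\Phi(\bar G)=\bar G^p\bar{G'}\le Z(\bar G)=D(T)/T$, that is $\Phi(G)\le D(T)$.

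The hard part will be the parity statement, and I would derive it from symplectic geometry. By the previous paragraph $\bar G^p\le Z(\bar G)$, so $\bar G/Z(\bar G)$ is elementary abelian, a genuine $\F_p$-vector space. The commutator map $\beta\bigl(aZ(\bar G),bZ(\bar G)\bigr)=[a,b]\in\bar{G'}\cong\F_p$ is well defined, and it is bi-additive (hence $\F_p$-bilinear) and alternating precisely because $\bar G$ has class $2$; its radical is $Z(\bar G)/Z(\bar G)=0$, so $\beta$ is nondegenerate. A nondegenerate alternating form lives only on an even-dimensional space, whence $\dim_{\F_p}\bigl(\bar G/Z(\bar G)\bigr)=\log_p|G:D(T)|$ is even. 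The crux is thus recognizing that the parity is forced by this alternating form; the role of the second assertion is exactly to guarantee that $\bar G/Z(\bar G)$ is a true $\F_p$-vector space, after which the symplectic argument closes the proof.
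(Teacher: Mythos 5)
Your proof is correct and follows essentially the same route as the paper: the first assertion via normality of $[x,G]$ and the existence of a $G$-invariant maximal subgroup of $G'$ above any proper $G$-invariant subgroup, the containment $\Phi(G)\le D(T)$ via $(G')^p\gamma_3(G)\le T$ and the class-two power-commutator identity, and the parity via the nondegenerate alternating form induced by the commutator map on $G/D(T)$. You simply supply more of the routine details (the module argument for finding $T$, the verification that $\bar G/Z(\bar G)$ is an $\F_p$-space) that the paper leaves implicit.
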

\begin{proof}
Since $[x,G]$ is a normal subgroup of $G$, we have $[x,G]<G'$ if and only if $x\in D(T)$ for some $T\max G'$ with $T\trianglelefteq G$, and the first assertion follows.

On the other hand, let $T\max G'$ with $T\trianglelefteq G$.
We have $[\Phi(G),G]=[G^p,G]\gamma_3(G)=(G')^p\gamma_3(G)\le T$, and so $\Phi(G)\le D(T)$.
Thus $G/D(T)$ can be seen as an $\F_p$-vector space.
In addition, the  commutator map in $G/T$ induces a a non-degenerate alternating form on $G/D(T)$, and thus
$\dim_{\F_p} G/D(T)$ is even.
\end{proof}

\section{Proof of Theorem B}\label{section uniserial case}

Before proving Theorem B we need the following simple lemma, according to which the first part of Remark \ref{remark index} can be stated in a more general way, even if $G$ is potent.

\begin{lemma}
\label{lemma index}
Let $G$ be a potent $p$-group with $p\ge 3$. If $N\le L$ are two normal subgroups of $G$, then $|N:N^{p^i}|\le|L:L^{p^i}|$ for all $i\ge 0$. In particular $|L^{p^i}:N^{p^i}|\le |L:N|$.
\end{lemma}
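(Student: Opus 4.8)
The plan is to exploit the fact that, in a potent $p$-group, \emph{every} normal subgroup is power abelian, and then to read off the indices $|N:N^{p^i}|$ from the order-counting sets $\Omega_i$, which are manifestly monotone under inclusion. First I would invoke Lemma~\ref{lemma potent}(i): since $G$ is potent with $p>2$ and both $N$ and $L$ are normal in $G$, both $N$ and $L$ are power abelian. This is the crucial input, because it gives, for every $i\ge 0$, the identities $|N:N^{p^i}|=|\Omega_i(N)|$ and $|L:L^{p^i}|=|\Omega_i(L)|$, together with the explicit descriptions $\Omega_i(N)=\{g\in N\mid o(g)\le p^i\}$ and $\Omega_i(L)=\{g\in L\mid o(g)\le p^i\}$.

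With these identities in hand the first assertion is immediate. Since $N\le L$, every element of $N$ of order at most $p^i$ is an element of $L$ of order at most $p^i$, so $\Omega_i(N)\subseteq\Omega_i(L)$ as subsets of $G$. Passing to cardinalities and using the two identities yields $|N:N^{p^i}|=|\Omega_i(N)|\le|\Omega_i(L)|=|L:L^{p^i}|$, as desired. For the ``in particular'' clause I would argue purely by bookkeeping of indices: from the two chains $N^{p^i}\le N\le L$ and $N^{p^i}\le L^{p^i}\le L$ one computes $|L:N^{p^i}|$ in two ways, obtaining the relation $|L:N|\,|N:N^{p^i}|=|L:L^{p^i}|\,|L^{p^i}:N^{p^i}|$. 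Substituting the inequality $|N:N^{p^i}|\le|L:L^{p^i}|$ just established then forces $|L^{p^i}:N^{p^i}|\le|L:N|$.

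There is no serious obstacle here: once the correct tool is identified the proof is a short counting argument. The only genuine point to get right is that \emph{both} $N$ and $L$ (not merely $G$ itself, or powers of $G$) are power abelian, which is precisely the content of Lemma~\ref{lemma potent}(i) applied to normal subgroups, combined with the obvious monotonicity of $\Omega_i$ under subgroup inclusion. I would also note that finiteness of $G$ guarantees all the indices involved are finite, so the two-way index computation in the last step is legitimate.
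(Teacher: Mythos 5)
Your proof is correct and follows essentially the same route as the paper: invoke Lemma~\ref{lemma potent} to see that $N$ and $L$ are power abelian, identify $|N:N^{p^i}|$ with $|\Omega_i(N)|$, and use the obvious inclusion $\Omega_i(N)\subseteq\Omega_i(L)$. The only difference is that you spell out the two-way index computation for the ``in particular'' clause, which the paper leaves implicit.
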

\begin{proof}
By Lemma \ref{lemma potent}, the subgroups $N$ and $L$ are power abelian, so in particular $|N:N^{p^i}|=|\Omega_i(N)|$ and $|L:L^{p^i}|=|\Omega_i(L)|$. Since obviously $|\Omega_i(N)|\le|\Omega_i(L)|$, the result follows.
\end{proof}

\begin{proof}[Proof of Theorem B]If $d=1$, then $G'$ is cyclic and the result follows from Theorem \ref{theorem d(G')=2}, so assume $d\ge 2$ (and in particular $p\ge 3$).
For the sake of simplicity we will write $G_i=\gamma_i(G)(G')^p$, so that
$$(G')^p=G_{d+2}\le G_{d+1}\le \ldots \le G_3\le G_2=G'$$
is a series from $G'$ to $(G')^p$ such that $|G_i:G_{i+1}|=p$ for all $2\le i \le d+1$.
Note that if $N\max G'$ with $N\trianglelefteq G$, then $G_3\le N$.
Therefore, $N=G_3$ and $G_3$ is the unique subgroup satisfying those conditions.
Hence, $D$ is a subgroup of $G$ whose index is greater than $p$ by Lemma \ref{lemma D}.
Note also that there are only $d-1\le p-2$ two-step centralizers, which are maximal by Remark \ref{remark step}.
Thus, 
we can take $x\in G\setminus (D\cup C_2\cup \ldots\cup C_d)$.
By Lemma \ref{lemma D} we have $G'=[x,G]$ and since $C_2$ is maximal in $G$ we have $G'=[x,G]=[x,\langle x\rangle C_2]=[x,C_2]$.
In particular $G'/G_3=\langle[x,g_1]G_3\rangle$ for some $g_1\in C_2$.
Furthermore, since $x\not\in C_i$ for $2\le i\le d$, we also have $G_{i+1}/G_{i+2}=\langle[x,g_i]G_{i+2}\rangle$ for some suitable $g_i\in G_i$.
It follows from Lemma \ref{lemma central} and Lemma \ref{lemma domino} that $G'/(G')^p\subseteq K_{x(G')^p}(G/(G')^p)$.

Recall that $\gamma_{d+2}(G)\le (G')^p$, and since $d\le p-1$, it follows that $\gamma_{p-1}(G')\le\gamma_{2(p-1)}(G)\le\gamma_{2d}(G)$. Thus, since $2d\ge d+2$ we have $\gamma_{p-1}(G')\le (G')^p$, so that $G'$ is potent. In this case the power map from $G'/(G')^p$ to $(G')^p/(G')^{p^2}$ defined above Remark \ref{remark index} need not be a homomorphism. However, we can restrict its domain and codomain in order for it to be so. We claim that the map from $G_i/G_{i+1}$ to $G_i^p/G_{i+1}^p$ sending $gG_{i+1}$ to $g^pG_{i+1}^p$ is an epimorphism for every $2\le i\le d+1$.

Take $x,y\in G_i$. By the Hall-Petresco Identity we have
$$(xy)^p=x^py^pc_2^{\binom{p}{2}}c_3^{\binom{p}{3}}\ldots c_p$$
with $c_j\in \gamma_j(G_i)$.
Obviously if $2\le j\le p-1$ then $c_j^{\binom{p}{j}}\in G_{i+1}^p$. Besides, if $j=p$, since $G_i\le G'$, we have
$$c_p\in[G_i,\overset{p}{\ldots},G_i]\le[G_i,G,\overset{2(p-1)}{\ldots},G]\le G_{i+1}^p,$$
where the last inequality holds since by Lemma \ref{lemma index} we have
$$|G_i:G_{i+1}^p|=|G_i:G_i^p||G_i^p:G_{i+1}^p|\le p^{d+1}$$
and $d+1\le 2(p-1)$.
Moreover, since $G'$ is potent it follows that $G_i$ is power abelian, so the map must be an epimorphism. The claim is proved.

Thus, by Lemma \ref{lemma index} it follows that we have a series
$$((G')^p)^p=G_{d+2}^p\le G_{d+1}^p\le\ldots\le G_3^p\le G_2^p=(G')^p$$
in which each factor $G_{i+1}^p/G_{i+2}^p$ has order less than or equal to $p$ and is generated by the image of $[x,g_i]^p$ for every $1\le i\le d$.
In order to apply Lemma \ref{lemma hall2} let us prove that
$$[x,g_i]^p\equiv [x,g_i^p]\pmod{G_{i+2}^p}$$
for every $i$. Assume first $i=1$. We will use again the Hall-Petresco Identity so that
$$[x,g_1]^p=[x,g_1^p]c_2^{\binom{p}{2}}c_3^{\binom{p}{3}}\ldots c_p$$
with $c_j\in\gamma_j(\langle[x,g_1],g_1\rangle)\le[G,C_2,\overset{j}{\ldots},C_2]$.
If $2\le j\le p-1$ then $c_j^{\binom{p}{j}}\in G_3^p$.
If $j=p$, we have
$$c_p\in[G,C_2,\overset{p}{\ldots},C_2]\le[G_4,C_2,\overset{p-2}{\ldots},C_2].$$
Lemma \ref{lemma index} yields $|G_4:G_3^p|\le p^{d-1}$, and since $d-1\le p-2$, we conclude $c_p\in G_3^p$. For $i\ge 2$ we have $g_i\in G'$, so the claim follows more easily applying the same method.

Now, $d\le p-1\le p^2-p-1$, so we apply Lemma \ref{lemma hall2} with $j=1$ and we get $(G')^p\subseteq K_x(G)$. Since $G'/(G')^p\subseteq K_{x(G')^p}(G/(G')^p)$, we conclude by Lemma \ref{lemma domino}.
\end{proof}

\begin{remark}
If the exponent of $G'$ is $p$, that is, if $(G')^p=1$, then, following the same method, Theorem B can be stated for $d\le p+1$. Indeed, if $G$ is the union of $p+1$ proper subgroups, then all of them must be maximal.
\end{remark}

\section{Proof of Theorem A when $G'$ is Powerful}\label{section powerful case}

In order to prove Theorem A we need the following technical lemma, which will be very helpful when using induction on the order of the group.

\begin{lemma}
\label{lemma union}
Let $G$ be a finite $p$-group with $p\ge 5$, $G'$ powerful and $d(G')=3$. Assume there exist $x,u,v\in G$ such that $G'=\langle [u,v],[x,u],[x,v]\rangle$, $G'\neq [x,G]$ and $[x,G,G]\le(G')^p$. Then, there exists a family of proper subgroups of $G$ such that $[x,G](G')^p$ equals the union of their derived subgroups. Moreover, each of these derived subgroups is powerful.
\end{lemma}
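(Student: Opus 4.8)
The plan is to describe the subgroup $M:=[x,G](G')^p$ completely and then to cover it by the derived subgroups of the preimages of the lines of a $2$-dimensional module. First I would analyse $M$. Since $[x,G,G]\le(G')^p$, the map $\phi\colon G\to G'/(G')^p$ given by $\phi(g)=[x,g](G')^p$ is a homomorphism, because $[x,gh]\equiv[x,g][x,h]$ modulo $(G')^p$. As $G'$ is powerful with $d(G')=3$ we have $|G':(G')^p|=p^3$, and the classes $\overline{[u,v]},\overline{[x,u]},\overline{[x,v]}$ form an $\F_p$-basis of $G'/(G')^p$; hence $\imm\phi$ contains the plane $V:=\langle\overline{[x,u]},\overline{[x,v]}\rangle$. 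On the other hand $G'\ne[x,G]$ forces $[x,G](G')^p\ne G'$ (otherwise $[x,G]\Phi(G')=G'$ would give $[x,G]=G'$), so $\imm\phi=V$ and $M/(G')^p=V$ has index $p$ in $G'$. Finally $[M,G]=[x,G,G]\,[(G')^p,G]\le(G')^p$, so $V$ is a trivial $G$-module.

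Because $V$ is a trivial module, each line $\ell\le V$ yields a normal subgroup $P_\ell\trianglelefteq G$ (its preimage) with $(G')^p\le P_\ell\le M$ and $|P_\ell:(G')^p|=p$; as the $p+1$ lines cover $V$, I obtain $\bigcup_\ell P_\ell=M$. The statement thus reduces to realising each $P_\ell$ as a derived subgroup of a proper subgroup and to checking that $P_\ell$ is powerful. Powerfulness follows from the commutator calculus of the powerful group $G'$: writing $P_\ell=\langle t\rangle(G')^p$ with $t\in G'$, the inclusions $[G',(G')^p]\le(G')^{p^2}$ and $[(G')^p,(G')^p]\le(G')^{p^2}$ (valid for $p$ odd) give $[P_\ell,P_\ell]\le(G')^{p^2}\le P_\ell^{\,p}$, so $P_\ell$ is powerful.

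It remains to build, for each line $\ell$, a proper subgroup $H_\ell$ with $H_\ell'=P_\ell$. Choosing $(a,b)\neq(0,0)$ with $\ell=\langle a\overline{[x,u]}+b\overline{[x,v]}\rangle$, I set $g_\ell=u^av^b$ and $H_\ell=\langle x,g_\ell,u^p,v^p\rangle$. The image of $H_\ell'$ in $V$ is exactly $\ell$: the only generating commutator with nonzero image is $[x,g_\ell]$ (which spans $\ell$), while every commutator involving $u^p$ or $v^p$ lands in $(G')^p$ since $u^p,v^p\in\ker\phi$. The real content is that $(G')^p\le H_\ell'$. By the Hall--Petresco identity (cf.\ Lemma \ref{lemma hall}) one has, modulo $(G')^{p^2}=\Phi((G')^p)$,
\[
[x,u^p]\equiv[x,u]^p,\quad[x,v^p]\equiv[x,v]^p,\quad[g_\ell,v^p]\equiv[u,v]^{ap},\quad[g_\ell,u^p]\equiv[u,v]^{-bp}.
\]
Since $(a,b)\neq(0,0)$, these place all three generators $[x,u]^p,[x,v]^p,[u,v]^p$ of $(G')^p$ into $H_\ell'\Phi((G')^p)$, and a Frattini argument gives $(G')^p\le H_\ell'$. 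Hence $H_\ell'=P_\ell$, and $H_\ell$ is automatically proper because $H_\ell'=P_\ell\subsetneq G'$.

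The hard part will be precisely this last step: arranging the generators of $H_\ell$ so that $H_\ell'$ captures the whole of $(G')^p$ — in particular the transverse element $[u,v]^p$ — without dragging the forbidden direction $\overline{[u,v]}$ into the image of $H_\ell'$ in $V$. This is exactly what forces the use of the $p$-th powers $u^p,v^p$ (rather than $u,v$) as generators and makes the Hall--Petresco congruences above carry the argument; verifying these congruences carefully, together with the powerfulness inclusions used for $P_\ell$, is where the machinery of Section \ref{section preliminaries} is needed.
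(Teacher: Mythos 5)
Your strategy is essentially the one the paper uses: cover $M=[x,G](G')^p$ by the $p+1$ subgroups lying between $(G')^p$ and $M$, and realise each as the derived subgroup of an explicit proper subgroup built from $x$, an element $g$ with $[x,g]$ spanning the chosen line, and $p$-th powers of $u,v$ so that Hall--Petresco congruences force $(G')^p$ into the derived subgroup (the paper takes $H_i=\langle x,uv^i,v^p\rangle$ for $0\le i\le p-1$ and $H_p=\langle x,v,u^p\rangle$; your $\langle x,u^av^b,u^p,v^p\rangle$ is a cosmetic variant). The covering, the properness argument, and the powerfulness of the $P_\ell$ are all fine.

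Two of your justifications do not hold as written, though both are repairable. First, ``every commutator involving $u^p$ or $v^p$ lands in $(G')^p$ since $u^p,v^p\in\ker\phi$'' is a non sequitur for $[g_\ell,u^p]$ and $[g_\ell,v^p]$: the homomorphism $\phi$ only governs commutators with $x$ in the first slot. (The claim is still true, but it needs a separate Hall--Petresco computation in $G/(G')^p$, using that $G'/(G')^p$ has exponent $p$ and that $G/(G')^p$ has class at most $4$.) Second, and more seriously, the congruences $[g_\ell,v^p]\equiv[u,v]^{ap}$ and $[g_\ell,u^p]\equiv[u,v]^{-bp}$ modulo $(G')^{p^2}$ are not justified and are in general false: Lemma \ref{lemma hall} requires the relevant cyclic quotient $L/N$ to have $N$ normal in $G$, and $\langle[u,v]\rangle(G')^p$ need not be normal, since only $[x,G,G]\le(G')^p$ is assumed, not $\gamma_3(G)\le(G')^p$ (indeed the lemma is invoked in Case 1 of Theorem \ref{theorem d(G')=3, powerful} precisely when $\gamma_3(G)\not\le(G')^p$). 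The error terms, such as $[u,v,v]^{\binom{p}{2}}$, lie in $\gamma_3(G)^p(G')^{p^2}$, which may exceed $(G')^{p^2}$. What Lemma \ref{lemma hall} does give (taking $L=G'$, $N=M$, and noting $\gamma_3(G)\le M$ because $|G':M|=p$) is the same congruence modulo $M^p$; this still suffices, because $M^p=\langle[x,u]^p,[x,v]^p\rangle(G')^{p^2}$ is already contained in $H_\ell'(G')^{p^2}$ once your congruences for $[x,u^p]$ and $[x,v^p]$ are in hand, so the Frattini argument closes as intended. With these two points patched, your proof is correct and matches the paper's.
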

\begin{proof}
Consider the subgroups $H_{i}=\langle x,uv^i,v^p\rangle$ for $0\le i\le p-1$ and $H_{p}=\langle x,v,u^p\rangle$.
Let us prove that $H_{i}'=\langle[x,uv^i]\rangle(G')^p$ for $0\le i\le p-1$ and that $H_p'=\langle [x,v]\rangle(G')^p$.

Suppose first $i\neq p$.
Since $G'=\langle [u,v],[x,u],[x,v]\rangle$ and $G'\neq[x,G]$, we have $|G':[x,G](G')^p|=p$, and since $[x,G,G]\le (G')^p$, the map
\begin{equation*}
    \begin{array}{ccc}
        G &  \longrightarrow  &  [x,G](G')^p/(G')^p \\
        g     &  \longmapsto  &  [x,g](G')^p
    \end{array}
\end{equation*}
is a homomorphism.
Therefore, we can write
$$G'=\langle [u,v],[x,uv^i],[x,v]\rangle.$$
Thus, since $G'$ is powerful, we have
$$(G')^p=\langle [u,v]^p,[x,uv^i]^p,[x,v]^p\rangle.$$
The subgroups $[x,G](G')^p$ and $\langle[x,v]\rangle(G')^p$ are normal in $G$ since $[x,G,G]\le (G')^p$, so taking $k=0$ in Lemma \ref{lemma hall}, it follows that
$$[u,v]^p\equiv [uv^i,v^p]\pmod{([x,G](G')^p)^p}$$
and
$$[x,v]^p\equiv[x,v^p]\pmod{(G')^{p^2}}.$$
Hence,
$$(G')^p=\langle[uv^i,v^p],[x,v^p],[x,uv^i]^p\rangle\le H_{i}',$$
so that $\langle[x,uv^i]\rangle(G')^p= H_{i}'$, as asserted.
Similar arguments imply that $H_{p}'=\langle [x,v]\rangle(G')^p$. 

It is easy to see now that $[x,G](G')^p=\bigcup_{i=0}^p H_{i}'$ (just observe that the $H_{i}'$ are precisely the subgroups between $[x,G](G')^p$ and $(G')^p$). Finally, notice that $|H_{i}':(G')^p|=p$ for every $i$, so since $(G')^p$ is powerfully embedded in $G'$, it follows by \cite[Lemma 11.7]{khu} that $H_{i}'$ is powerful. Thus, the proof is complete.
\end{proof}

We are now in a position to prove Theorem A in the case that $G'$ is powerful.

\begin{theorem}
\label{theorem d(G')=3, powerful}
Let $G$ be a finite $p$-group with $G'$ powerful, $d(G')\le 3$ and $p\ge 5$. Then, $G'=K(G)$.
\end{theorem}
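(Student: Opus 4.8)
The plan is to argue by induction on $|G|$ and to reduce everything to a single application of Lemma \ref{lemma hall2}. Since $G'$ is powerful we have $|G':(G')^p|=p^{d(G')}$; if $d(G')\le 2$ the statement is Theorem \ref{theorem d(G')=2}, so I will assume $d(G')=3$ and hence $|G':(G')^p|=p^3$. As $p\ge 5$ we have $3\le p-2=p^{1}-p^{0}-1$, so Lemma \ref{lemma hall2} applies with $k=0$ once the required data are produced. Thus the whole problem reduces to exhibiting $x\in G$, a $G$-chief series $(G')^p=N_3<N_2<N_1<N_0=G'$, and elements $g_0,g_1,g_2\in G$ with $N_j/N_{j+1}=\langle[x,g_j]N_{j+1}\rangle$ for $j=0,1,2$; this yields $G'=K_x(G)\subseteq K(G)$. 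Throughout I regard $V=G'/(G')^p$ as a $3$-dimensional $\F_p[G]$-module under conjugation (the action is unipotent, so every chief factor inside $V$ is a trivial module of order $p$), and I keep in mind the centralizer $C=C_G(G'/(G')^p)$.

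First I would treat the configurations in which the chief series can be built directly. Suppose $G\neq D$, so that by Lemma \ref{lemma D} there is $x\notin D$ with $[x,G]=G'$. The key observation is that the map $g\mapsto [x,g](G')^p$ is a homomorphism from $G$ into the abelian group $V$ precisely when $[x,G,G]\le (G')^p$, that is, when the image of $[x,\cdot]$ lands in the $G$-fixed points of $V$. Hence, when $G$ acts trivially on $V$ (equivalently $C=G$, equivalently $\gamma_3(G)\le (G')^p$), this map is a surjective homomorphism onto $V$; every subspace of $V$ is then a submodule, and pulling back any complete flag of $V$ produces a strictly descending chain $G=H_0>H_1>H_2>H_3$ with $H_j=\{g\in G:[x,g]\in N_j\}$. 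Choosing $g_j\in H_j\setminus H_{j+1}$ realizes the series, and Lemma \ref{lemma hall2} finishes. If instead the action of $G$ on $V$ is nontrivial but uniserial modulo $(G')^p$, then $d=3\le p-1$ and Theorem B applies verbatim.

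The genuinely hard case is a nontrivial, non-uniserial action on $V$; here taking $x\notin D$ makes $g\mapsto [x,g](G')^p$ nonlinear, and it may even happen that $D=G$, so that no single $x$ satisfies $[x,G]=G'$ --- precisely the phenomenon recorded in Remark \ref{remark fixed element}. In this situation I would pass to proper subgroups. Exploiting the non-uniserial structure of $V$, I would select $x$ with $[x,G](G')^p$ of index $p$ in $G'$ and $[x,G,G]\le (G')^p$, together with $u,v$ realizing $G'=\langle[u,v],[x,u],[x,v]\rangle$. This is exactly the input of Lemma \ref{lemma union}, which presents $[x,G](G')^p$ as a union $\bigcup_i H_i'$ of derived subgroups of proper subgroups $H_i$, with each $H_i'$ powerful. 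Since $H_i'\le G'$ and $G'$ is powerful, property (ii) of powerful groups gives $d(H_i')\le d(G')=3$, and $|H_i|<|G|$, so the inductive hypothesis yields $H_i'=K(H_i)\subseteq K(G)$; hence the index-$p$ subgroup $[x,G](G')^p\subseteq K(G)$. To finish, I would capture the remaining coset of $[x,G](G')^p$ in $G'$ (the class of $[u,v]$) by running the analogous construction along complementary generators, so that every element of $G'$ lies in the derived subgroup of some proper subgroup, and therefore in $K(G)$.

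I expect the main obstacle to be the module-theoretic book-keeping for the nontrivial non-uniserial module $V$: verifying that one can always arrange the precise hypotheses of Lemma \ref{lemma union} --- a codimension-one image of $[x,\cdot]$ that is fixed by $G$ (so $[x,G,G]\le(G')^p$) and is generated by two commutators $[x,u],[x,v]$, together with a complementary commutator $[u,v]$ --- and, most delicately, ensuring that the proper subgroups supplied by induction cover \emph{all} of $G'$ rather than merely an index-$p$ subgroup. The supporting technical points are deciding, via the parity and index bounds of Lemma \ref{lemma D}, exactly when $G\neq D$, and confirming in each branch that the subgroups $H_i$ have powerful derived subgroup of rank at most $3$, so that the inductive step is genuinely available.
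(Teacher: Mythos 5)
Your skeleton --- induction on $|G|$, reduction to $d(G')=3$, a case division governed by the action of $G$ on $V=G'/(G')^p$, and Lemma \ref{lemma union} plus Lemma \ref{lemma hall2} as the engines --- matches the paper's, and you have correctly located the danger points. But two of the steps you flag as ``obstacles'' are genuine gaps that your outline does not close. First, the case you dispose of quickest is actually among the hardest: when the action on $V$ is trivial ($\gamma_3(G)\le (G')^p$) it can perfectly well happen that $D=G$, so that $[x,G](G')^p<G'$ for \emph{every} $x$ (the group of Remark \ref{remark fixed element} is exactly of this type: class $2$ and exponent $p$). Your flag-pullback argument only covers $G\ne D$, and you mis-file the $D=G$ possibility under the ``nontrivial non-uniserial'' heading. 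The paper handles the trivial-action case by invoking Rodney's result \cite[Theorem B]{rodab} on $G/(G')^p$ to obtain the covering $G'=\bigcup_{x}[x,G](G')^p$, then showing every index-$p$ piece $[x,G](G')^p$ sits inside an index-$p^2$ piece $[y,G](G')^p$, and only then feeding the index-$p^2$ pieces into Lemma \ref{lemma union} and the induction; none of this machinery appears in your proposal.

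Second, and more seriously, your plan for ``the remaining coset'' fails in the case $|G':\gamma_3(G)(G')^p|=p$ with $\gamma_4(G)\le(G')^p$. There Lemma \ref{lemma union} is applied with $x=[u,v]$, and what induction delivers is $\Gamma=\gamma_3(G)(G')^p\subseteq K(G)$ --- only an index-$p$ subgroup of $G'$. The elements of $G'\setminus\Gamma$ are \emph{not} recovered by ``running the analogous construction along complementary generators'': the paper instead proves that each $g\in G'\setminus\Gamma$ is itself a commutator by a successive-approximation argument, showing $g\equiv[x,y]\pmod{\Gamma^{p^i}}$ for every $i$ via Lemma \ref{lemma hall} and the powerful structure of $G'$, and crucially invoking Honda's theorem \cite{honda} that $[x,y]^{1+jp^{i}}$ is again a commutator. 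This external ingredient is absent from your proposal, and there is no evident substitute within the toolkit you deploy. Smaller issues: your ``nontrivial non-uniserial'' bucket conflates the paper's Case 1 and Case 2 --- the latter, $|G':\gamma_3(G)(G')^p|=p^2$, is settled directly by a counting argument showing $C\cup D\ne G$ followed by Lemma \ref{lemma hall2}, with neither induction nor Lemma \ref{lemma union} --- and the existence of the configuration $G'=\langle[u,v],[x,u],[x,v]\rangle$ demanded by Lemma \ref{lemma union} has to be established in each branch, which the paper does with some care (including ruling out $G'=K_u(G)$ along the way).
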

\begin{proof}
We proceed by induction on the order of $G$. For $d(G')\le 2$ the result follows from Theorem \ref{theorem d(G')=2}. Now assume that $d(G')=3$ and note that we have $|G':(G')^p|=p^3$.
We will consider three different cases depending on the position of the subgroup $\Gamma=(G')^p\gamma_3(G)$.

\vspace{15pt}

\textbf{\textit{Case 1}}. $|G':\Gamma|=p$.

\vspace{15pt}




If $|\Gamma:\gamma_4(G)(G')^p|=p$, then the action of $G$ on $G'$ is uniserial modulo $(G')^p$ and the result follows from Theorem B.

Assume then $\gamma_4(G)\le (G')^p$. If $G'=K_x(G)$ for some $x\in G$, then, of course, we are done, so assume $G'\neq K_x(G)$ for every $x\in G$. We claim that there exist $u,v\in G$ such that $G'=\langle [u,v],[u,v,u],[u,v,v]\rangle$. For that purpose we can suppose that $(G')^p=1$.
As seen in the proof of Theorem B we have $D=D(\Gamma)$, and since $C$ is also a proper subgroup of $G$ (otherwise $\gamma_3(G)\le (G')^p$), we can take $u\not\in C\cup D$.
Then, $G'=[u,G]$ by Lemma \ref{lemma D} and $C_{G/\Gamma}(u\Gamma)\neq G/\Gamma$. Let us write $C^*/\Gamma=C_{G/\Gamma}(u\Gamma)$.

Since $u\not\in C$ we have $[u,G']\neq 1$. If $[u,G']=\Gamma$, then, we can find a series of normal subgroups of $G$ from $G'$ to $(G')^p$ such that all factors have order $p$ and are generated by images of elements of the form $[u,g]$ for some suitable $g\in G$.
Thus, Lemma \ref{lemma hall2} implies $G'=K_u(G)$, which is a contradiction.
Therefore, we have $|[u,G']|=p$ and hence $C_G(G'/[u,G'])\neq G$.
Take thus $v\in G\setminus (C_G(G'/[u,G'])\cup C^*)$.
Then, $G'/\Gamma=\langle[u,v]\Gamma\rangle$ (because $v\not\in C^*)$, and again, as we have seen for $u$, we also have $|[v,G']|=p$.
It follows that $[u,v,u],[u,v,v]\neq 1$.
Furthermore, since $v\not\in C_G(G'/[u,G'])$, we have $[u,G']\neq [v,G']$, and we conclude that $G'=\langle[u,v],[u,v,u],[u,v,v]\rangle$. This proves the claim.

Remove now the assumption of $(G')^p= 1$ and observe that $[[u,v],G,G]\le(G')^p$, so we are in the situation of Lemma \ref{lemma union}. It follows then that $\Gamma$ is the union of the derived subgroups of some proper subgroups of $G$. These derived subgroups are all powerful, and since $d(G')=3$, they all can be generated by $3$ elements. So, by induction, $\Gamma\subseteq K(G)$.

Take now $g\in G'\setminus \Gamma$ arbitrary. We claim that $g$ is a commutator modulo $\Gamma^{p^i}$ for every $i\ge 0$ (and hence that $g$ is a commutator). We proceed by induction on $i$. Clearly, we have $g=[x,y]z$ for some $x,y\in G$, $z\in \Gamma$, so the case $i=0$ is satisfied. Assume then that $i\ge1$ and $g=[x,y]z_1$ where $x,y\in G$ and $z_1\in\Gamma^{p^{i-1}}$.

Note that $G'/\Gamma=\langle[x,y]\Gamma\rangle$, so since $[\Gamma,G]\le(G')^p$, we have $\Gamma/(G')^p=\{[x,y,h](G')^p\mid h\in G\}$.
Besides, since $G'$ is powerful, the power map from $G'/(G')^p$ to $(G')^{p^{i-1}}/(G')^{p^i}$ is an epimorphism, so that $\Gamma^{p^{i-1}}/(G')^{p^{i}}=\{[x,y,h]^{p^{i-1}}(G')^{p^{i}}\mid h\in G\}$.
By Lemma \ref{lemma hall} we have
$$\Gamma^{p^{i-1}}/(G')^{p^{i}}=\{[x,y,h^{p^{i-1}}](G')^{p^{i}}\mid h\in G\}.$$
Thus,
$$g=[x,y][x,y,h^*]z_2=[x^{h^*},y^{h^*}]z_2$$
for some $h^*\in G$ and $z_2\in (G')^{p^{i}}$. We rewrite, in order to simplify the notation, $x$ instead of $x^{h^*}$ and $y$ instead of $y^{h^*}$, so that $g=[x,y]z_2$.

Note again that $G'/\Gamma=\langle[x,y]\Gamma\rangle$, so it follows that
$$(G')^{p^i}/\Gamma^{p^i}=\langle [x,y]^{p^i}\Gamma^{p^i}\rangle.$$
Therefore, 
$$g=[x,y][x,y]^{jp^{i}}z_3=[x,y]^{1+jp^{i}}z_3$$
with $j\ge 0$ and $z_3\in \Gamma^{p^{i}}$.
Now, by the last theorem in \cite{honda}, there exist $x',y'\in G$ such that $[x,y]^{1+jp^{i}}=[x',y']$, so $g=[x',y']z_3$ with $z_3\in \Gamma^{p^{i}}$, as claimed.

\vspace{15pt}

\textbf{\textit{Case 2}}. $|G':\Gamma|=p^2$.

\vspace{15pt}

Let us prove that $C\cup D\neq G$.
On the one hand, as seen before, $\Gamma\le N$ for all $N\max G'$ which are normal in $G$, and since $|G':\Gamma|=p^2$, there are exactly $p+1$ subgroups between $G'$ and $\Gamma$. Furthermore, since they are central over $\Gamma$, they are all normal in $G$. Thus, $D=D(U_1)\cup\ldots\cup D(U_{p+1})$, where $U_1,\ldots,U_{p+1}\max G'$ are these normal subgroups. In addition, it follows from Lemma \ref{lemma D} that $|G:D(U_i)|\ge p^2$ for every $i$.

On the other hand, observe again that $C\neq G$. Hence, if we write $|G|=p^n$, we have
\begin{align*}
    |C\cup D|&\le |C|+|D|\le \sum_{i=1}^{p+1}|D(U_i)|+|C|\\
    &\le(p+1)p^{n-2}+p^{n-1}=2p^{n-1}+p^{n-2}<p^n,
\end{align*}
as we wanted. Take now $x\not\in C\cup D$. Since $x\not\in D$ we have $G'/\Gamma=[x,G]\Gamma/\Gamma$ by Lemma \ref{lemma D}, and since $x\not\in C$ we have $\Gamma/(G')^p=[x,G'](G')^p/(G')^p$.
Thus, since all subgroups between $G'$ and $\Gamma$ are central and hence normal in $G$, we can construct a series from $G'$ to $(G')^p$ where all factors have order $p$ and are generated by images of commutators of the form $[x,g]$ with $g\in G$. Again, the result follows from Lemma \ref{lemma hall2}.

\vspace{15pt}

\textbf{\textit{Case 3}}. $\gamma_3(G)\le (G')^p$.

\vspace{15pt}

If $G'=[x,G](G')^p$ for some $x$, again, all the subgroups between $G'$ and $(G')^p$ are normal in $G$, so we could construct a series from $G'$ to $(G')^p$ in such a way that we would be done by Lemma \ref{lemma hall2}.
Therefore, assume $[x,G](G')^p<G'$ for every $x\in G$.
By \cite[Theorem B]{rodab} the result is satisfied for $G/(G')^p$, so we have $G'=\bigcup_{x\in G}[x,G](G')^p$.
Thus, it suffices to prove that $[x,G](G')^p\subseteq K(G)$ for every $x\in G$.

Suppose first $|[x,G](G')^p:(G')^p|=p$.
We claim that there always exists $y\in G$ such that $[x,G](G')^p\le [y,G](G')^p\max G'$.
For that purpose, we assume $(G')^p=1$.
Note that $|G'/[x,G]|=p^2$, so by Theorem \ref{theorem d(G')=2}, there exists $u\in G$ such that $G'/[x,G]=[u,G][x,G]/[x,G]$.
Hence $G'=[u,G][x,G]$ with $|[u,G]|=p^2$.
Observe that $C_G(u),C_G(x)\neq G$, so take $y\not\in C_G(u)\cup C_G(x)$.
Thus, $[x,G]=\langle[x,y]\rangle$, and $[u,y]\ne 1$.
If $[u,y]\in\langle[x,y]\rangle$, then $[x,y]\in[u,G]$, a contradiction.
Observe, however, that $[x,y],[u,y]\in[y,G]$, so $|[y,G]|=p^2$.
Since $[x,G]\le [y,G]$, the claim is proved.

Hence, we only have to consider the case $|[x,G](G')^p:(G')^p|=p^2$.
We claim now that there exist $u,v\in G$ such that $G'=\langle [u,v],[x,v],[x,u]\rangle$.
Assume again that $(G')^p=1$.
Since $|[x,G]|=|\{[x,g]\mid g\in G\}|=p^2$, we have $|G:C_G(x)|=p^2$, and we can consider a maximal subgroup $M$ such that $C_G(x)<M<G$.
Observe that $G'=[G,G]=[G,M]$, $G=\langle G\setminus M\rangle$ and $M=\langle M\setminus C_G(x)\rangle$.
Hence, there exist $u\in G\setminus M$ and $v\in M\setminus C_G(x)$ such that $[u,v]\not\in [x,G]$.
Furthermore, $[x,G]=\langle [x,u],[x,v]\rangle$, so $G'=\langle [u,v],[x,u],[x,v]\rangle$, as claimed.

Remove now the assumption of $(G')^p=1$ and note that we are in the situation of Lemma \ref{lemma union} since $[x,G,G]\le\gamma_3(G)\le (G')^p$. Hence we have $[x,G](G')^p\subseteq K(G)$, as we wanted.
\end{proof}

\begin{remark}
Case 2 can be generalized for $p\ge 3$ using a slightly different version of Lemma \ref{lemma hall2}, but one must be more selective in the choice of $x$.
\end{remark}

\begin{remark}
\label{remark fixed element}
It is not true that, in general, if $d(G')=3$ we have $G'=K_x(G)$ for some $x\in G$, as we had in Theorem \ref{theorem d(G')=2} or Theorem B. Indeed, let $G=F_3/\gamma_3(F_3)F_3^p$, where $F_3$ is the free group on 3 generators and $p\ge 3$ is a prime. Note that $G'$ is 3-generator and $|G:Z(G)|=p^3$. Now, if $x\in Z(G)$, then $K_x(G)=1$, and if $x\not\in Z(G)$, then $|K_x(G)|=|G:C_G(x)|\le p^2$ since $\langle Z(G),x\rangle\le C_G(x)$.
\end{remark}

\section{Proof of Theorem A when $G'$ is Non-Powerful}\label{section non-powerful case}

The following example, taken directly from \cite[Example 14.24, page 376]{huppert}, shows that unlike the case when $d(G')\le 2$, it may happen that $G'$ is non-powerful when $d(G')=3$.

\begin{example}
\label{example huppert}
Let $p\ge 5$ and consider the groups $A=\langle a_1\rangle\times\langle a_2\rangle\times\langle a_3\rangle\cong C_p\times C_p\times C_p$ and $B=\langle b_1\rangle\times\langle b_2\rangle\cong C_p\times C_p.$ Define $Y=A\rtimes B$ via the automorphisms
\begin{equation*}
    \begin{split}
    a_1^{b_1}=a_1a_3^{-1},\ \ \ a_2^{b_1}=a_2a_3,\ \ \ &a_3^{b_1}=a_3,\\
    a_1^{b_2}=a_1a_3^{-1},\ \ \ a_2^{b_2}=a_2,\ \ \ \ \ &a_3^{b_2}=a_3.
    \end{split}
\end{equation*}
Now, consider $X=\langle x\rangle\cong C_p$ and define $G=Y\rtimes X$ via the automorphism
\begin{equation*}
    \begin{split}
    a_1^{x}=a_1a_2^{-1},\ \ \ a_2^{x}&=a_2,\ \ \ a_3^{x}=a_3,\\
    b_1^{x}=b_1b_2^{-1},\ \ &\ b_2^{x}=b_2a_1^{-1}.
    \end{split}
\end{equation*}
The group $G$ is a $p$-group of maximal class of order $p^6$ and exponent $p$ such that $d(G')=3$ and $G''=\gamma_5(G)\neq 1$.
\end{example}

We will start, hence, analyzing which kind of groups may arise when $G'$ is non-powerful. Actually, we will see that in such a case, $G/(G')^p$ must be a very special kind of $p$-group, namely, a CF$(m,p)$-group. These groups were introduced by Blackburn in \cite{black2} and are defined as follows.

\begin{definition}
Let $m\in\N$ such that $m\ge 3$. A $p$-group $G$ is said to be a CF$(m,p)$-\emph{group} if the nilpotency class of $G$ is $m-1$ and the action of $G$ on $G'$ is uniserial.
\end{definition}

We next define the degree of commutativity on CF($m,p$)-groups exactly in the same way as for groups of maximal class.

\begin{definition}
Let $G$ be a CF$(m,p)$-group. The \emph{degree of commutativity} of $G$ is defined as
$$\max\{k\le m-2\mid [G_i,G_j]\le G_{i+j+k}\text{ for all }i,j\ge 1\},$$
where $G_1=C_2$ and $G_i=\gamma_i(G)$ for all $i\ge 2$.
\end{definition}

Lemma \ref{lemma 5-uniserial} below shows that if $G'$ is non-powerful, then we can reduce our proof to a very particular group which is a CF($6,p$)-group modulo $(G')^p$. The key part of the proof is the following lemma due to Blackburn.

\begin{lemma}[\cite{black2}, Theorem 2.11]
\label{lemma black2}
Let $G$ be a \emph{CF}$(m,p)$-group with $m$ odd and $5\le m\le 2p+1$. Then $G$ has degree of commutativity greater than 0.
\end{lemma}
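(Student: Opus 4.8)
The plan is to prove that the degree of commutativity $c$ of $G$ is at least $1$ by linearizing the commutator structure in the associated graded Lie ring and exploiting the uniserial action. Write $G_i=\gamma_i(G)$ for $i\ge 2$ and let $G_1=C_2$ be the two-step centralizer, so that uniseriality gives $|G_i:G_{i+1}|=p$ for $1\le i\le m-1$ and $G_m=1$. A standard consequence of the Hall--Witt identity is that $c\ge 1$ is equivalent to $[G_1,\gamma_j(G)]\le \gamma_{j+2}(G)$ for all $j$, so the whole problem reduces to controlling how $G_1$ acts along the chain. To fix coordinates I would choose $s\in G\setminus G_1$ inducing the uniserial action and $s_1\in G_1\setminus\gamma_2(G)$, and set $s_{i+1}=[s_i,s]$; by uniseriality each $s_i$ generates $G_i/G_{i+1}$ for $1\le i\le m-1$.

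Passing to the associated restricted Lie ring $L=\bigoplus_i \gamma_i(G)/\gamma_{i+1}(G)$, write $\bar z$ for the image of $z$. Assuming, for contradiction, that $c=0$, I would record the structure constants $b_{i,j}\in\F_p$ and $a_j\in\F_p$ defined by $[\bar s_i,\bar s_j]=b_{i,j}\bar s_{i+j}$ for $i,j\ge 2$ and $[\bar s_1,\bar s_j]=a_j\bar s_{j+1}$. Three facts are then available: antisymmetry $b_{i,j}=-b_{j,i}$; the boundary vanishing $b_{i,j}=0$ whenever $i+j\ge m$ (since $\gamma_m(G)=1$) together with $a_{m-1}=0$; and, from the Jacobi identity applied to the triples $(\bar s,\bar s_i,\bar s_j)$ and $(\bar s,\bar s_1,\bar s_j)$, the recursions
\[
b_{i,j+1}=b_{i,j}-b_{i+1,j},\qquad a_{j+1}=a_j-b_{2,j}.
\]
Feeding the boundary vanishing into the first recursion by downward induction on $i+j$ forces $b_{i,j}=0$ for all $i,j\ge 2$; the second recursion then shows that $a_j$ is constant along the chain, and the terminal value $a_{m-1}=0$ would, at leading order, propagate back to give $a_j=0$ for all $j$, that is $c\ge 1$.

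The point where this leading-order computation is genuinely incomplete, and where the hypotheses enter, is that $\mathrm{ad}(\bar s)$ is only a shift up to the restricted structure of $L$: once iterated $p$ times one has $\mathrm{ad}(\bar s)^{p}=\mathrm{ad}(\bar s^{[p]})$, where $\bar s^{[p]}$ lies in degree $p$, so the identification of $[s_i,s]$ with a clean shift, and hence the recursions above, acquire correction terms governed by the $\binom{p}{k}$ coefficients of the Hall--Petresco collection. The bound $m\le 2p+1$ is exactly what confines the chain so that at most one such $p$-power correction can occur, keeping all other binomial coefficients units modulo $p$, while the oddness of $m$ is what rules out the single potentially nonzero solution of the perturbed system (for even $m$ such a solution can survive, which is why the hypothesis is necessary); the lower bound $m\ge 5$ guarantees that the chain is long enough for the recursion to have content.

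Thus the main obstacle is not the Lie-ring bookkeeping, which is formal, but the precise control of the restricted $p$-power correction $\mathrm{ad}(\bar s^{[p]})$: one must show that within the range $5\le m\le 2p+1$, with $m$ odd, this correction cannot combine with the boundary and antisymmetry constraints to permit a nonzero constant value of $a_j$. Establishing this incompatibility — equivalently, a careful collection computation near the top of the chain — is the heart of the argument, and everything else follows formally from the uniserial action and the Jacobi identity.
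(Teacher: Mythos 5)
The paper does not actually prove this lemma: it is imported verbatim from Blackburn \cite{black2} (Theorem 2.11), so there is no in-paper argument to compare against, and your proposal must stand on its own. It does not, because the step that is supposed to do all the work is wrong. The Jacobi identity applied to $(\bar s,\bar s_i,\bar s_j)$ produces the relation $b_{i,j}=b_{i,j+1}+b_{i+1,j}$ only in degrees where the graded piece is nonzero, that is, only when $i+j+1\le m-1$; in degree $m$ and beyond it reads $0=0$. Consequently the values of $b_{i,j}$ on the anti-diagonal $i+j=m-1$ are \emph{not} forced to vanish --- they are free parameters constrained only by antisymmetry --- and the recursion then determines the lower anti-diagonals from this top one via Pascal-type sums with binomial coefficients; it does not kill them. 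Your ``downward induction'' therefore fails at its very first step. The same defect undoes the propagation of $a_{m-1}=0$: that equation lives in the trivial graded piece and carries no information, and the recursion $a_{j+1}=a_j-b_{2,j}$ is likewise unavailable at the top of the chain. If the argument worked as stated, the conclusion would hold for every $m$ and every $p$, which is false: the hypothesis that $m$ is odd is there precisely because CF$(m,p)$-groups of degree of commutativity $0$ exist for even $m$.

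Beyond this, you explicitly defer ``the heart of the argument,'' so what you have is a plan rather than a proof, and the mechanism you propose for where the hypotheses enter is not the right one. The binomial coefficients that must be controlled do not come from the restricted structure $\mathrm{ad}(\bar s)^p=\mathrm{ad}(\bar s^{[p]})$; they arise from iterating the Pascal recursion downward from the top anti-diagonal. In the actual argument one expresses each $b_{i,j}$ as an integral combination, with binomial coefficients, of the top anti-diagonal values, exploits $b_{k,k}=0$ (which supplies a vanishing entry on that anti-diagonal exactly when $m$ is odd, since then $m-1=2k$) together with antisymmetry, and uses $m\le 2p+1$ to keep the relevant binomial coefficients invertible modulo $p$, so that the resulting linear system forces all $b_{i,j}$, and then all $a_j$, to vanish. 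That computation is the entire content of the theorem and is absent from your proposal; as written, the sketch proves nothing and, where it is definite, is incorrect.
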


\begin{lemma}\label{lemma 5-uniserial}
Let $G$ be a finite $p$-group with $p\ge 3$, $d(G')=3$ and $G'$ non-powerful. Then $G/(G')^p$ is a \emph{CF}$(6,p)$-group.
\end{lemma}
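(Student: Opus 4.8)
The plan is to pass to $\bar G=G/(G')^p$ and prove it is a \emph{CF}$(6,p)$-group in two stages: first that the action of $\bar G$ on $\bar G'$ is uniserial, so that $\bar G$ is a \emph{CF}$(m,p)$-group for some $m$, and then that $m=6$, the latter being where Lemma \ref{lemma black2} does the decisive work.

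First I would record the reduction. Since $\bar G'=G'/(G')^p$ has $\Phi(\bar G')=(\bar G')^p[\bar G',\bar G']=G''(G')^p/(G')^p$, one gets $d(\bar G')=d(G')=3$, and $\bar G'$ has exponent $p$; moreover $G'$ being non-powerful means exactly $G''\not\le (G')^p$, i.e. $\bar G'$ is non-abelian. Thus it suffices to treat $(G')^p=1$, with $G'$ a non-abelian, $3$-generated group of exponent $p$, and to show $G$ itself is \emph{CF}$(6,p)$. Writing $L_i=\gamma_i(G)/\gamma_{i+1}(G)$, the aim of the first stage is $\dim_{\F_p}L_i\le 1$ for every $i\ge 2$.

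The main obstacle is this first stage, whose heart is to prove $|\gamma_2(G):\gamma_3(G)|=p$, equivalently that $M=G'/G''$ is a cyclic (hence uniserial) $\F_p[G/G']$-module. I would argue by ruling out $\dim L_2\in\{2,3\}$. The value $3$ collapses at once: $[M,G]=0$ forces $\gamma_3\le G''\le\gamma_4$, so the lower central series of $G'$ stabilises and $G'$ is abelian. The value $2$ is the delicate case: then $[M,G]$ is $1$-dimensional, the $p$-group $G/G'$ acts trivially on it, so $[M,G,G]=0$ and $\gamma_4=G''$; but $G''=[\gamma_2,\gamma_2]\neq 1$ would force $[L_2,L_2]$ to be nonzero in $L_4$, and a Jacobi/Hall--Witt computation in the associated graded Lie ring shows that when $\dim L_2=2$ and $\dim L_3\le 1$ the bracket $[L_2,L_2]$ must vanish \emph{precisely because $p$ is odd} (the obstruction is literally a surviving coefficient $2$). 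This contradiction gives $\dim L_2=1$. Granting $\dim L_2=1$, the same module analysis makes $M$ uniserial with factors of dimensions $1,1,1$ and $G''=\gamma_5$, and propagating the argument below $G''$ (where the relevant brackets $[L_2,L_3]$ are again at most $1$-dimensional) yields the full uniserial action, so $G$ is \emph{CF}$(m,p)$ for some $m$. This delicate bookkeeping is exactly where the hypothesis $p\ge 3$ is used.

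For the second stage, uniseriality makes $G''$ (which is normal in $G$) equal to one of the terms $\gamma_j$, and $d(G')=3$ forces $j=5$; thus $G''=\gamma_5\neq 1$ and the class of $G$ is at least $5$, i.e. $m\ge 6$. To obtain $m\le 6$ I would invoke Lemma \ref{lemma black2}: if $m\ge 7$, then $G/\gamma_7(G)$ is a \emph{CF}$(7,p)$-group, and since $7$ is odd with $5\le 7\le 2p+1$ for $p\ge 3$, that lemma gives degree of commutativity at least $1$; combined with $|\gamma_2:\gamma_3|=p$, which yields $[\gamma_2,\gamma_2]\le[\gamma_2,\gamma_3]$, this gives $G''=[\gamma_2,\gamma_2]\le\gamma_6(G)$, contradicting $G''=\gamma_5\not\le\gamma_6$. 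Hence $m=6$ and $\bar G=G/(G')^p$ is \emph{CF}$(6,p)$, as required.
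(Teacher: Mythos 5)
Your skeleton matches the paper's: reduce to $(G')^p=1$, observe $|G':G''|=p^3$ and that only $|G':\gamma_3(G)|\in\{p,p^2\}$ is possible, kill the case $p^2$, deduce $\gamma_3>\gamma_4>\gamma_5=G''$ with all factors of order $p$, and finally exclude class $\ge 6$ by passing to $G/\gamma_7(G)$ and invoking Lemma \ref{lemma black2} exactly as you do (the contradiction $G''=[\gamma_2,\gamma_3]\le\gamma_6$ versus $G''=\gamma_5$ is the paper's). Your intuition that the obstruction in the case $|G':\gamma_3(G)|=p^2$ is ``a surviving coefficient $2$'' is also exactly right: the paper's contradiction there boils down to $[b,y][z,a]\equiv[b,y]^2\not\equiv 1 \pmod{\gamma_4(G)}$ with $p$ odd.

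The gap is that this case --- the heart of the lemma, and most of the paper's proof --- is not actually proved: you assert that ``a Jacobi/Hall--Witt computation in the associated graded Lie ring shows that when $\dim L_2=2$ and $\dim L_3\le 1$ the bracket $[L_2,L_2]$ must vanish,'' but you neither carry out that computation nor cite a result that gives it. This is not a standard fact one can wave at; the paper has to work for it. Concretely, it first shows $D\cup C_3\neq G$ by a counting argument, picks $x\notin D\cup C_3$ and $y,z\in C_3$ with $a=[x,y]$, $b=[x,z]$ generating $G'$ modulo $\gamma_3(G)$, proves $Z(G'/\gamma_5(G))=\gamma_3(G)/\gamma_5(G)$ to conclude $C_3'\le\gamma_3(G)$ (i.e.\ that a specific \emph{maximal} subspace $Y$ of $L_1$ satisfies $[Y,Y]=0$ in $L_2$ --- an input your pure Lie-ring formulation does not supply), and only then expands $[b,y]^x$, $[z,a]^x$ and $[y,z]^x$ to reach $[y,z,x]\equiv[b,y][z,a][a,b]\pmod{\gamma_5(G)}$ and the contradiction above. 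A similar, milder issue affects your ``propagating the argument below $G''$'': the paper gets uniseriality there not by an abstract module argument but by writing $G'=\langle a,b,c\rangle$ with $a,b,c$ in prescribed layers and reading off that $\overline{G}''$ is generated by $\overline{[a,b]}\in\gamma_5(\overline{G})$ and $\overline{[a,c]}\in\gamma_6(\overline{G})$. Until the $\dim L_2=2$ computation is actually performed (or your general graded-Lie-ring claim is proved), the proposal is a correct plan rather than a proof.
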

\begin{proof}
Clearly we can assume $(G')^p=1$ and $G''\neq 1$. Thus, the Frattini subgroup of $G'$ is $G''$, and since $d(G')=3$, then $|G':G''|=p^3$. Note that $G''\le \gamma_4(G)$, so the only possibilities for $\gamma_3(G)$ are $|G':\gamma_3(G)|= p^2$ or $|G':\gamma_3(G)|=p$.

Assume first $|G':\gamma_3(G)|=p^2$.
Then, since $G''\le\gamma_4(G)$ and $|G':G''|=p^3$ we have $G''=\gamma_4(G)$.
In addition, $G'$ has two generators modulo $\gamma_3(G)$, which implies that $|G'':\gamma_5(G)|=p$ (recall that $(G')^p=1$).
Consider the subgroup $C_3$ and recall it is maximal by Remark \ref{remark step}.
In the same way as in Case 2 of Theorem \ref{theorem d(G')=3, powerful}, it can be seen that there are only $p+1$ maximal subgroups of $G'$ that are normal in $G$. Hence, making the same computations, it follows that $D\cup C_3\neq G$.

Thus, we can pick $x\in G\setminus(D\cup C_3)$, and we have $G'=[x,G]=[x,\langle x\rangle C_3]=[x,C_3]$. We can then find $y,z\in C_3$ such that $G'=\langle [x,y],[x,z],\gamma_3(G)\rangle$. We write $a=[x,y]$ and $b=[x,z]$ for simplicity. Thus, $\gamma_4(G)=\langle [a,b],\gamma_5(G)\rangle$, and we write, again for simplicity, $d=[a,b]$.

On the one hand,
$$[b,y]^x=[b[b,x],ya^{-1}]\equiv [b,y]d\pmod{\gamma_5(G)},$$
so that $[b,y,x]\equiv d \pmod{\gamma_5(G)}$. Similarly we get
$$[z,a]^x\equiv [z,a]d\pmod{\gamma_5(G)}$$
and so $[z,a,x]\equiv d\pmod{\gamma_5(G)}$. In particular $[b,y],[z,a]\not\in\gamma_{4}(G)$, and since $\gamma_3(G)/\gamma_4(G)$ is of order $p$, we have $[z,a]\equiv[b,y]^i\pmod{\gamma_4(G)}$ for some $1\le i\le p-1$. Note, however, that $$[z,a,x]\equiv[[b,y]^i,x]\equiv[b,y,x]^i\equiv d^i\pmod{\gamma_5(G)},$$
so we get $i=1$ and thus
$$1\not\equiv[b,y]\equiv[z,a]\pmod{\gamma_4(G)}.$$

On the other hand, we have $[G',C_3']\le [G',C_3,C_3]\le \gamma_5(G)$. Denote $Z/\gamma_5(G)=Z(G'/\gamma_5(G))$. We have $|G':Z|\ge p^2$, and since $[G',\gamma_3(G)]\le \gamma_5(G)$, we get $Z=\gamma_3(G)$. In particular, we get $C_3'\le \gamma_3(G)$, and the nilpotency class of $C_3$ is less than or equal to 2. Now,
$$[y,z]^x=[ya^{-1},zb^{-1}]\equiv [y,z][b,y][z,a]d\pmod{\gamma_5(G)},$$
so that $[y,z,x]\equiv [b,y][z,a]d\pmod{\gamma_5(G)}$. This is a contradiction since $[b,y][z,a]\in\gamma_3(G)\setminus\gamma_4(G)$ but $[y,z,x],d\in\gamma_4(G)$.

Therefore we must have $|G':\gamma_3(G)|=p$.
Thus
$$G''=[G',G']=[G',\gamma_3(G)]\le \gamma_5(G),$$
and since $|G':G''|=p^3$, we have $|\gamma_3(G):\gamma_4(G)|=|\gamma_4(G):\gamma_5(G)|=p$ and $G''=\gamma_5(G)$. Let us write $\overline{G}=G/\gamma_7(G)$. Note that
$$\gamma_3(G')=[G'',G']=[\gamma_5(G),G']\le \gamma_{7}(G),$$
so $\overline{\gamma_3(G')}=\overline{1}$ and since $d(\overline{G}')=3$, then $d(\overline{G}'')\le2$. Indeed, we can write $G'=\langle a,b,c\rangle$ with $a\in G'\setminus \gamma_3(G)$, $b\in\gamma_3(G)\setminus\gamma_4(G)$ and $c\in\gamma_4(G)\setminus\gamma_5(G)$, and so the generators of $\overline{G}''$ are $\overline{[a,b]}\in\gamma_5(\overline{G})$ and $\overline{[a,c]}\in\gamma_6(\overline{G})$ (note that $\overline{[b,c]}\in\gamma_7(\overline{G})=\overline{1}$). Hence $|\gamma_5(\overline{G}):\gamma_{6}(\overline{G})|=p$ and $|\gamma_6(\overline{G})|\le p$.

If $|\gamma_6(\overline{G})|=\overline{1}$ then $\gamma_6(G)=\gamma_7(G)=1$ and we are done, so assume $|\gamma_6(\overline{G})|= p$.
Thus, $\overline{G}$ is a CF($7,p$)-group, and since $p\ge 3$, by Lemma \ref{lemma black2} it follows that the degree of commutativity of $\overline{G}$ is greater than 0. In particular we have $\overline{G}''=[\gamma_2(\overline{G}),\gamma_3(\overline{G})]\le\gamma_6(\overline{G})$, which is a contradiction. The lemma follows.
\end{proof}

With all this, the second part of the proof of Theorem A follows easily.

\begin{theorem}
\label{theorem d(G')=3, non-powerful}
Let $G$ be a finite $p$-group with $G'$ non-powerful, $d(G')\le 3$ and $p\ge 5$. Then, $G'=K(G)$.
\end{theorem}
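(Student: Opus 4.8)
The plan is to reduce the entire statement to Theorem~B by way of the structural result Lemma~\ref{lemma 5-uniserial}. First I would dispose of the case $d(G')\le 2$: by Blackburn's theorem (\cite[Theorem 1]{black}, recalled at the start of Section~\ref{section preliminaries}) a finite $p$-group whose derived subgroup is $2$-generator has $G'$ powerful, contradicting the hypothesis that $G'$ is non-powerful. Hence we may assume $d(G')=3$, which is exactly the setting of Lemma~\ref{lemma 5-uniserial}. Applying that lemma, $G/(G')^p$ is a CF$(6,p)$-group.

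Next I would translate the CF$(6,p)$ structure into the two hypotheses of Theorem~B. By definition a CF$(6,p)$-group has nilpotency class $5$ and acts uniserially on its derived subgroup. Since the derived subgroup of $G/(G')^p$ is $G'/(G')^p$ and its lower central series is $\gamma_i(G)(G')^p/(G')^p$, this is precisely the assertion that the action of $G$ on $G'$ is uniserial modulo $(G')^p$, the first hypothesis of Theorem~B. It remains to compute $d=\log_p|G':(G')^p|$. Because the class of $G/(G')^p$ equals $5$ we have $\gamma_6(G)\le (G')^p$ while $\gamma_5(G)\not\le (G')^p$, and the uniserial action forces each factor $\gamma_i(G)(G')^p/\gamma_{i+1}(G)(G')^p$ to have order exactly $p$ for $2\le i\le 5$ (a trivial factor would collapse the series and lower the class). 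Thus $|G':(G')^p|=p^4$, so $d=4$; this is consistent with $d(G')=3$, since $G''\equiv\gamma_5(G)$ modulo $(G')^p$ has order $p$ there, whence $|G':(G')^pG''|=p^3$.

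Finally, since $p\ge 5$ we have $d=4\le p-1$, so both hypotheses of Theorem~B are satisfied. Theorem~B then provides an element $x\in G$ with $G'=\{[x,g]\mid g\in G\}$, and in particular $G'=K(G)$, as required. Combined with Theorem~\ref{theorem d(G')=3, powerful}, this completes the proof of Theorem~A.

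The genuinely hard work lies entirely in the results already established: in Lemma~\ref{lemma 5-uniserial}, where the Blackburn degree-of-commutativity argument of Lemma~\ref{lemma black2} rules out every non-powerful configuration except the CF$(6,p)$ one, and in Theorem~B itself. The only delicate point in this final step is the bookkeeping that identifies $d$ with $4$ and the observation that $4\le p-1$ holds precisely because $p\ge 5$; this is exactly where the hypothesis $p\ge 5$ (rather than merely $p\ge 3$, as in Lemma~\ref{lemma 5-uniserial}) is genuinely needed to invoke Theorem~B.
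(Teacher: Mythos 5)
Your proposal is correct and follows essentially the same route as the paper: reduce to $d(G')=3$, invoke Lemma \ref{lemma 5-uniserial} to get the CF$(6,p)$ structure of $G/(G')^p$, read off that the action is uniserial modulo $(G')^p$ with $|G':(G')^p|=p^4\le p^{p-1}$, and conclude by Theorem B. The only cosmetic difference is that the paper dismisses $d(G')\le 2$ by citing Theorem \ref{theorem d(G')=2} directly, whereas you observe that this case is vacuous under the non-powerful hypothesis by Blackburn's theorem; both are valid.
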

\begin{proof} We assume $d(G')=3$ by Theorem \ref{theorem d(G')=2}. By Lemma \ref{lemma 5-uniserial} the action of $G$ on $G'$ is uniserial modulo $(G')^p$ and, in addition, $|G':(G')^p|=p^4\le p^{p-1}$ since $p\ge 5$. The result follows directly from Theorem B.
\end{proof}

Thus, combining Theorem \ref{theorem d(G')=3, powerful} and Theorem \ref{theorem d(G')=3, non-powerful} we establish Theorem A.



\section{Proof of Theorems A$'$ and B$'$}\label{section pro case}

The analogous to Theorem A and Theorem B for pro-$p$ groups can be easily proved. Let $\mathcal{A}$ and $\mathcal{B}$ be the family of groups satisfying the conditions of Theorem A and Theorem B respectively. Using this notation, we prove now Theorem A$'$ and Theorem B$'$ together.

\begin{proof}[Proof of Theorem A$'$ and Theorem B$'$]
Let $G$ be a pro-$p$ group satisfying the conditions of Theorem A$'$ or Theorem B$'$. Then, for every open normal subgroup $N$ of $G$ we have $G/N\in\mathcal{A}$ or $G/N\in\mathcal{B}$, and therefore, by Theorem A or Theorem B respectively, we have $G'N/N=K(G)N/N$ or $G'N/N=K_{x_N}(G)N/N$ for some $x_N\in G$ depending on the normal subgroup $N$.

In the case of Theorem B$'$, let $X_N=\{x\in G \mid (G/N)'=K_{xN}(G/N)\}$, which is closed in $G$, being a union of cosets of $N$.
Clearly, the family $\{ X_N \}_{N\trianglelefteq_{o} G}$ has the finite intersection property and, since $G$ is compact, $\cap_{N\trianglelefteq_{o} G} \, X_N \ne \varnothing$.
If $x$ belongs to this intersection, then $(G/N)'=K_{xN}(G/N)$ for all $N\trianglelefteq_{o} G$.
Thus, let us write $\mathcal{K}(G)$ to refer to the subset $K(G)$ if we are in the situation of Theorem A$'$, or to the subset $K_x(G)$ if we are in the situation of Theorem B$'$. Note that in both cases $\mathcal{K}(G)$ is closed in $G$, being the image of a continuous function.

On the other hand, since $G'$ is topologically finitely generated we have
$$G'=\Cl_
{G'}(\langle [x_1,x_2],\ldots,[x_{2n-1},x_{2n}]\rangle)$$
for suitable $n\ge 1$ and $x_i\in G$ with $1\le i\le 2n$.
Define $H=\Cl_G(\langle x_1,\ldots,x_{2n}\rangle)$. Since $H$ is topologically finitely generated, it follows that $H'$ is closed in $H$, and hence in $G$. Thus, $H'$ is also closed in $G'$, so $G'=H'$. Therefore, $G'$ is closed in $G$. 
Now,
$$G'=\overline{G'}=\bigcap_{N\trianglelefteq_{o}G}G'N=\bigcap_{N\trianglelefteq_{o}G}\mathcal{K}(G)N=\overline{\mathcal{K}(G)}=\mathcal{K}(G)$$
and the proof is complete.
\end{proof}

\end{document}